\DeclareMathAlphabet{\pazocal}{OMS}{zplm}{m}{n}
\newtheorem{theorem}{Theorem}[section]
\newtheorem{lemma}[theorem]{Lemma}
\newtheorem{proposition}[theorem]{Proposition}
\theoremstyle{definition}
\theoremstyle{remark}
\newtheorem{remark}[theorem]{Remark}
\numberwithin{equation}{section}
\newcommand{\R}{\ensuremath{\mathbb{R}}}
\newcommand{\N}{\ensuremath{\mathbb{N}}}
\newcommand{\set}[1]{\left\{#1\right\}}
\newcommand{\ga}{\gamma}
\newcommand{\f}{\infty}
\newcommand{\Om}{\Omega}
\begin{document}
\title[Self-similar sets with complete overlaps]{On a kind of self-similar sets with complete overlaps}

\author[D. Kong]{Derong~Kong}
\address[D. Kong]{College of Mathematics and Statistics, Chongqing University, 401331, Chongqing, P.R.China}
\email{derongkong@126.com}

\author[Y. Yao]{Yuanyuan~Yao$^*$}
\address[Y. Yao]{Department of Mathematics, East China University of Science and Technology, Shanghai 200237, P.R. China}
\email{yaoyuanyuan@ecust.edu.cn}

 \subjclass[2010]{{Primary: 28A80, Secondary: 11A63, 28A78}}
\begin{abstract}
Let $E$ be the self-similar set generated by the {\it iterated function system}
{\[
f_0(x)=\frac{x}{\beta},\quad f_1(x)=\frac{x+1}{\beta}, \quad f_{\beta+1}=\frac{x+\beta+1}{\beta}
\]}with $\beta\ge 3$. {Then} $E$ is a self-similar set with   complete {overlaps}, i.e., $f_{0}\circ f_{\beta+1}=f_{1}\circ f_1$, but $E$ is not totally self-similar.
 We investigate all its generating iterated function systems, give the spectrum of $E$, and determine the Hausdorff dimension and Hausdorff measure of $E$ and of the sets  which contain all points in $E$ having finite or infinite different triadic codings.
 \end{abstract}
 \keywords{Iterated function system; self-similar set; complete overlap; spectrum; multiple codings.}
 \thanks{$^*$ Corresponding author.}
\maketitle

\section{introduction}\label{sec:introduction}

Let $\beta\ge 3$, and let $E_\beta$ be the self-similar set generated by the \emph{iterated function system} (IFS)
\[f_d(x)=\frac {x+d}{\beta},\quad d\in\set{0,1,\beta+1}.\]
Then $E_\beta$ is the unique non-empty compact set in the real line satisfying $E_\beta=\cup_{d\in {\set{0,1,\beta+1}}}f_d(E_\beta)$ (cf. \cite{Hutchinson}).
It is easy to check   $f_{0}\circ f_{\beta+1}=f_{1}\circ f_1$. Then the self-similar set $E_\beta$ has   complete {overlaps}.

Our interest in $E_\beta$ comes from expansions in non-integer bases (for the surveys see \cite{Komornik,Sidorov}). One example is expansions with digit set $\{0,1,\beta\}$. For $\beta>1$, let $F_\beta$ be the attractor of the IFS
{\[ \phi_d(x)=\frac{x+d}{\beta},\quad d\in\{0,1,\beta\}.\]}Then $F_\beta$ is a self-similar set with overlaps since $\phi_0(F_\beta)\cap \phi_1(F_\beta)\ne \emptyset$.
There has been considerable interest in $F_\beta$. For example, Ngai and Wang \cite{NW} investigated the Hausdorff dimension of $F_\beta$. Zou et al.~\cite{ZLL} considered the set of points in $F_\beta$ having a unique $\beta$-expansion. Yao and Li \cite{YL} gave all the generating IFSs of $F_\beta$. Dajani et al.~\cite{DJKL} described the size of the set of bases $\beta$ for which there exists $x\in F_\beta$ having finite or countably many different $\beta$-expansions and the set of $x\in F_\beta$ which have exactly finite or countable $\beta$-expansions.

There are two striking differences between $E_\beta$ and $F_\beta$, one is that the {\it  {total self-similarity}} (see \cite{BMS} for its first appearance) fails in $E_\beta$,
as we will explain later. Another is that by the obvious fact that $\phi_0\circ \phi_\beta=\phi_1\circ \phi_0$ we have $\phi_{1^k0}=\phi_{0\beta^k}$ for any positive integer $k$, {which is an important property   in discussing $F_\beta$}. However, we do not see this property in $E_\beta$.

We will abbreviate $E_\beta$ to $E$ {if no confusion arises}. Observe that for each point $x\in E$ there exists an infinite sequence $(d_i)\in\set{0,1,\beta+1}^\N$ such that
 \begin{equation}\label{eq:coding-map}
 x=\lim_{n\to\f}f_{d_1\dots d_n}(0):=\lim_{n\to\f}f_{d_1}\circ\cdots\circ f_{d_n}(0)=\sum_{i=1}^\f\frac{d_i}{\beta^i}.
 \end{equation}
 The infinite sequence $(d_i)$ is called a \emph{coding} of $x$. Since $f_{0(\beta+1)}=f_{11}$, a point $x\in E$ may have multiple codings.
By (\ref{eq:coding-map}) it follows that
\[
E=\set{\sum_{i=1}^\f\frac{d_i}{\beta^i}: d_i\in\Omega},
\]
where $\Omega:=\set{0,1,\beta+1}$ is the \emph{alphabet} which will be fixed throughout the paper.

 \begin{figure}[h!]
\begin{center}
\begin{tikzpicture}[
    scale=9,
    axis/.style={very thick},
    important line/.style={thick},
    dashed line/.style={dashed, thin},
    pile/.style={thick, ->, >=stealth', shorten <=2pt, shorten
    >=2pt},
    every node/.style={color=black}
    ]
    \draw[axis] (0,0)  -- (1.5,0) node(xline)[right]{};

      \node[] at (0, 0.05){$0$};

      \node[] at (1.6, 0){$\Delta$};
      \node[] at (1.48, 0.05){$\ga_\beta:=\frac{\beta+1}{\beta-1}$};

      \draw[axis] (0,-0.2)  -- (0.3,-0.2) node(xline)[right]{};
 \node[] at (0.15, -0.15){$f_0$};
     \draw[axis] (0.2,-0.21)  -- (0.5,-0.21) node(xline)[right]{};
 \node[] at (0.35, -0.16){$f_1$};
    \draw[axis] (1.2,-0.2)  -- (1.5,-0.2) node(xline)[right]{};
 \node[] at (1.35, -0.15){$f_{\beta+1}$};
  \node[] at (1.6, -0.2){$\Delta_1$};

    \draw[axis] (0,-0.4)  -- (0.06,-0.4) node(xline)[right]{};
 \node[] at (0.03, -0.35){$f_{00}$};
     \draw[axis] (0.04,-0.41)  -- (0.1,-0.41) node(xline)[right]{};
 \node[] at (0.07, -0.44){$f_{01}$};
    \draw[blue,axis] (0.24,-0.4)  -- (0.3,-0.4) node(xline)[right]{};
 \node[blue] at (0.27, -0.35){$f_{0(\beta+1)}=f_{11}$};

 \draw[axis] (0.2,-0.41)  -- (0.26,-0.41) node(xline)[right]{};
 \node[] at (0.23, -0.44){$f_{10}$};
  \draw[axis] (0.44,-0.41)  -- (0.5,-0.41) node(xline)[right]{};
 \node[] at (0.47, -0.44){$f_{1(\beta+1)}$};

   \draw[axis] (1.2,-0.4)  -- (1.26,-0.4) node(xline)[right]{};
 \node[] at (1.23, -0.35){$f_{(\beta+1)0}$};
     \draw[axis] (1.24,-0.41)  -- (1.3,-0.41) node(xline)[right]{};
 \node[] at (1.27, -0.44){$f_{(\beta+1)1}$};
    \draw[axis] (1.44,-0.4)  -- (1.5,-0.4) node(xline)[right]{};
 \node[] at (1.47, -0.35){$f_{(\beta+1)(\beta+1)}$};

  \node[] at (1.6, -0.4){$\Delta_2$};

\end{tikzpicture}
\end{center}
\caption{The first three levels $\Delta, \Delta_1, \Delta_2$ of basic intervals of $E_\beta$ with $\beta=5$.}\label{fig:1}
\end{figure}
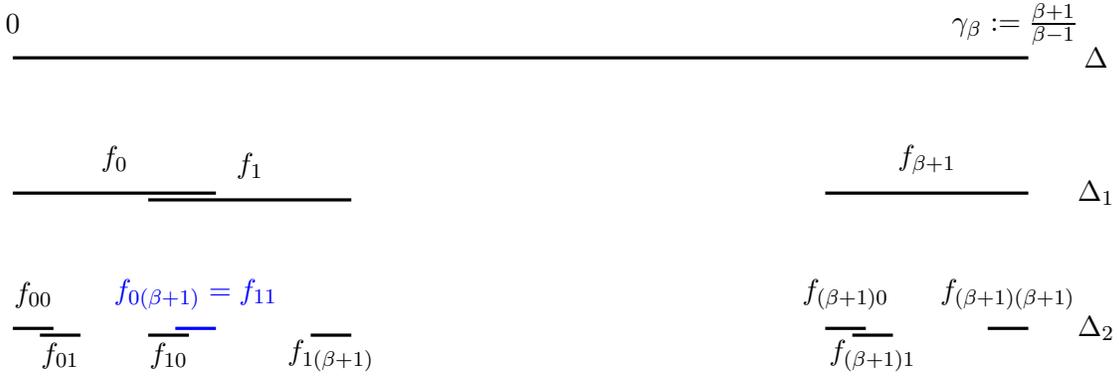


Let $\Delta=[0,\ga_\beta]$ be the convex hull of $E$,  where $\ga_\beta:=(\beta+1)/(\beta-1)$. Then  for $n\in\N$ let
\[\Delta_n:=\bigcup_{{\bf i}\in\Omega^n}f_{\bf i}(\Delta)\] be the union of all level-$n$ basic intervals of $E$  (see Figure \ref{fig:1} for the  first three levels of basic intervals), where $\Omega^n:=\set{d_1\ldots d_n: d_i\in\Omega}$ is the set of all blocks of length $n$  with respect to {the} alphabet $\Omega$.  Since $\beta\ge 3$, the basic interval $f_{10}(\Delta)$ is on the righthand side of  $f_{01}(\Delta)$, and their intersection is nonempty {(in fact is a singleton only if $\beta=3$)}.

We write
\[H:=\Delta\backslash \Delta_1=(f_1(\ga_\beta), f_{\beta+1}(0)),\quad \textrm{and}\quad H_{\bf i}=f_{\bf i}(H).\]
 Then $H$ and $H_{\bf i}$ are open intervals for all ${\bf i}\in\Omega^*:=\bigcup_{n=1}^\f \Omega^n$.
By a {\it hole} of $E$ we mean an open interval $(a,b)$ with $a,b\in E$ and $(a,b)\cap E=\emptyset$.
Then $H$ is the biggest hole of $E$.
Unlike the situation in $F_\beta$, we emphasize that $H_{\bf i}$ is not necessarily a hole of $E$. For example, $H_0=f_0(H)$ is not a hole since $H_0\cap f_{10}(E)\ne\emptyset$. By \cite[Proposition 2.1]{DKY}  it follows that $E$ is not {\it totally self-similar}.

As a new class of self-similar sets with complete overlaps we first investigate all the generating iterated function systems (IFSs) of $E$.
We call a nonempty compact set
$F\subseteq {\mathbb R}^d$ a {\it self-similar set} if it is a finite union of its self-similar copies; this is, there exists a family of contractive similitudes
${\mathcal F}=\{\psi_i(x)=\rho_iO_ix+b_i\}_{i=1}^N$ ($N\ge 2$ is an integer) such that $F=\bigcup_{i=1}^N\psi_i(F)$, where $\rho_i\in(0,1),O_i$ is a {$d\times d$}
orthogonal matrix and $b_i{\in \mathbb R^d}$ is a translation. The family ${\mathcal F}$ is called a generating IFS for $F$. It is well known that ${\mathcal F}$ determines
$F$ uniquely, but not vice versa.

The question of determining all the generating IFSs for a given self-similar set was initiated by Feng and Wang \cite{FW1} and was later studied by Deng and Lau \cite{DL1,DL2}, and Yao \cite{Yao}. The requirement
of some separation condition (open set condition or strong separation {condition}, etc.) is crucial in their proof. If the assumption of separation condition is dropped, the problem will be more complicated. Dajani et al.~\cite{DKY} first {gave} an answer to this problem for a kind of self-similar sets which are totally self-similar.


Our first  main result is on the  generating IFSs of $E$, which satisfies neither of the conditions stated in the above paragraph.
\begin{theorem}\label{ifs}
If $g$ is an affine map such that $g(E)\subseteq E$, then $g=f_{\bf i}$ for some ${\bf i}\in\Omega^*$.
\end{theorem}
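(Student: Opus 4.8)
The plan is to write $g(x)=rx+t$ with $r\neq 0$ (a constant map sends $E$ to a single point and is not of the claimed form, so ``affine map'' is understood here to be invertible) and to locate $g(E)$ in the tree of basic intervals by descent. Comparing diameters gives $|r|\,\ga_\beta=\operatorname{diam}g(E)\le\operatorname{diam}E=\ga_\beta$, so $|r|\le 1$. The heart of the matter is a \emph{descent lemma}: if $\phi(x)=\rho x+\tau$ satisfies $\phi(E)\subseteq E$ and $0<|\rho|<1$, then $\phi(E)\subseteq f_d(E)$ for some $d\in\Omega$. Granting this, $f_d^{-1}\circ\phi$ is again affine, maps $E$ into $E$, and has contraction ratio of absolute value $\beta|\rho|$; iterating on $g$ itself and using at each stage that the ratio cannot exceed $1$ in absolute value (diameters again), the process stops after finitely many, say $n\ge 1$, steps, so $|r|=\beta^{-n}$ and $g=f_{d_1}\circ\cdots\circ f_{d_n}\circ g_0$ with $g(E)\subseteq f_{d_1\cdots d_n}(E)$ and $g_0:=(f_{d_1\cdots d_n})^{-1}\circ g$ an isometry satisfying $g_0(E)\subseteq E$.

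Matching $\min$ and $\max$, such a $g_0$ is either the identity or the reflection $g_0(x)=\ga_\beta-x$. In the latter case $g(E)\subseteq f_{d_1\cdots d_n}(E)$ would force $\ga_\beta-E\subseteq E$, which is impossible: $E$ contains a point $w$ with $\ga_\beta/\beta<w<1$ --- for instance any point of $f_{1(\beta+1)}(E)$ other than its maximum $f_1(\ga_\beta)$, since $f_{1(\beta+1)}(E)\subseteq\big(\ga_\beta/\beta,\ f_1(\ga_\beta)\big]$ and $f_1(\ga_\beta)\le 1$ --- and then $\ga_\beta-w$ lies in the hole $H=(f_1(\ga_\beta),f_{\beta+1}(0))$, so $\ga_\beta-w\notin E$. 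Hence $g_0=\operatorname{id}$ and $g=f_{\bf i}$ with ${\bf i}=d_1\cdots d_n$. Since nothing above used the sign of $r$, this simultaneously rules out orientation-reversing $g$; and the case $|r|=1$ is just ``$n=0$'', where $g$ is already an isometry preserving $E$, hence $g=\operatorname{id}$ (which we read as ${\bf i}$ empty, or simply exclude).

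For the descent lemma, recall $E\subseteq[0,f_1(\ga_\beta)]\cup[f_{\beta+1}(0),\ga_\beta]$ with the hole $H$ in between, and that $H$ is the \emph{biggest} hole of $E$. If $\phi(E)$ met both $[0,f_1(\ga_\beta)]$ and $[f_{\beta+1}(0),\ga_\beta]$, then $\phi(E)$ --- being inside $E$ and hence disjoint from $H$ --- would split as $A\sqcup B$ across $H$; applying $\phi^{-1}$ to this splitting would exhibit a hole of $E$ of length at least $|H|/|\rho|>|H|$, a contradiction. Thus $\phi(E)\subseteq f_{\beta+1}(E)$ (and we take $d=\beta+1$), or $\phi(E)\subseteq L:=f_0(E)\cup f_1(E)$.

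The remaining --- and main --- step is to show that if $\phi(E)\subseteq L$ then $\phi(E)\subseteq f_0(E)$ or $\phi(E)\subseteq f_1(E)$. This is exactly where the failure of total self-similarity bites: the ``expected'' separating hole $f_0(H)$ is not a hole of $E$, being punctured by $f_{10}(E)$ (and, at finer scales, by $f_{1(\beta+1)}$- and $f_{10\cdots}$-type pieces). The intended approach is to work with the two overlapping halves of $L$ directly: using $f_0\circ f_{\beta+1}=f_1\circ f_1$ together with the identity $f_{\beta+1}(z)=f_1(z)+1$ one gets $E\cap(E+1)=f_{\beta+1}(E)$ for $\beta\ge 4$, hence $f_0(E)\cap f_1(E)=f_{0(\beta+1)}(E)=f_{11}(E)$; one checks that the largest genuine hole of $L$ is $f_1(H)$, of length $|H|/\beta$, and that for $\beta\ge 4$ the block $f_{00}(E)\cup f_{01}(E)\subseteq[0,f_{01}(\ga_\beta)]$ is separated from the rest of $L$ by the genuine hole $\big(f_{01}(\ga_\beta),\,f_1(0)\big)$ --- here $\beta\ge 3$ is essential, the hole collapsing to the single touching point $1/\beta$ when $\beta=3$, which is handled separately. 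Feeding these holes into the same $\phi^{-1}$-argument at the scale appropriate to $\operatorname{diam}\phi(E)$, and using that $E$ has no isolated points, one pins $\phi(E)$ inside $f_0(E)$ or $f_1(E)$ as soon as $|\rho|<1/\beta$; the genuinely delicate regime is $1/\beta\le|\rho|\le 2/(\beta+1)$, where $\phi(E)$ is too big to fit in a single copy $E/\beta$ and one must rule out such $\phi$ altogether by classifying which copies of $E$ can possibly sit inside $L$ (equivalently, inside $E\cup(E+1)$). I expect organising this last case, with the accompanying multi-scale bookkeeping of which small holes survive the overlaps, to be the principal obstacle; everything else is routine.
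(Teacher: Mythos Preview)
Your reduction to a ``descent lemma'' --- $\phi(E)\subseteq E$ with $0<|\rho|<1$ forces $\phi(E)\subseteq f_d(E)$ for some $d\in\Omega$ --- and the subsequent iteration and handling of isometries are correct and clean. The problem is that the descent lemma is not proved, and your sketch for it has real gaps. In the range $|\rho|<1/\beta$, after using the hole $f_1(H)$ you land in $f_0(\Delta)\cap E=f_0(E)\cup f_{10}(E)$; the only separating gap here is $(f_{01}(\ga_\beta),1/\beta)$, of length $(\beta-3)/(\beta(\beta-1))$, which is much smaller than $|H|/\beta$. Your $\phi^{-1}$-hole argument therefore only produces a contradiction when $|\rho|<(\beta-3)/(\beta^2-2\beta-1)$, leaving a wide band of $|\rho|<1/\beta$ untreated (and nothing at all for $\beta=3$, which you defer but never handle). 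The range $1/\beta<|\rho|\le 2/(\beta+1)$ you correctly identify as requiring that \emph{no} such $\phi$ exist, but ``classifying which copies of $E$ sit inside $E\cup(E+1)$'' is essentially the theorem itself and is not carried out. In short, the descent lemma is a posteriori equivalent to the theorem, and your proposal leaves its proof open.

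The paper's route is genuinely different: it does \emph{not} attempt a direct descent. Instead it first determines the ratio by a \emph{lifting} argument: if $0<|\mu|<1/\beta$ then $\beta\mu E+c\subseteq E$ for some $c$ (Lemma~\ref{mu2}), proved by iterating two auxiliary affine maps $S(x)=\beta x+\mu$ and $T(x)=\beta x+\mu\beta+\mu-1$ on the translation parameter and tracking their orbits through the regions of Lemma~\ref{belong}; combined with Lemma~\ref{mu1} (which rules out $1/\beta<|\mu|<1$) this forces $\mu=\beta^{-n}$. Only then is $b$ pinned down, by induction on $n$ with the base case $n=1$ (Lemma~\ref{b1}) and an inductive step that again requires multi-case analysis (cases (B1)--(B3)) going well beyond a single hole comparison. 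Your framework would need machinery of comparable depth to close; as written it is an outline, not a proof.
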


During the past fifty years the question on the spectrum {of overlapping fractals} have been extensively studied because of its close connections with infinite Bernoulli convolutions and
expansions in noninteger bases.  Many works have been devoted to this topic, see \cite{AK,EK,Feng,FW2,PS} {and  the references therein}.

{Motivated by the work of \cite{DKY} we define the \emph{spectrum} of $E$ by
\begin{eqnarray*}
\ell_\beta:=\inf\left\{\left|\sum_{i=0}^{n-1}d_i\beta^i\right|\ne 0:d_i\in\Om-\Om=\{0,\pm 1,\pm \beta,\pm (\beta+1)\},n\in{\mathbb N}\right\}.
\end{eqnarray*}
In our second result we show that   the spectrum $\ell_\beta$ of $E$   is constant for all $\beta\ge 3$.}
\begin{proposition}\label{prop:spectrum}
For any $\beta\ge 3$, we have $\ell_\beta=1$.
\end{proposition}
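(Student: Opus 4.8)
The plan is to prove the two inequalities $\ell_\beta\le 1$ and $\ell_\beta\ge 1$ separately. The upper bound is immediate: taking $n=1$ and $d_0=1\in\Om-\Om$ gives $\left|\sum_{i=0}^{0}d_i\beta^i\right|=1\ne 0$, so $\ell_\beta\le 1$. Everything of substance is in the lower bound, i.e.\ in showing that every nonzero value $\sum_{i=0}^{n-1}d_i\beta^i$ with $d_i\in\Om-\Om$ has modulus at least $1$.

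For the lower bound I would first split the digits. Each $d\in\Om-\Om=\{0,\pm1,\pm\beta,\pm(\beta+1)\}$ can be written as $d=\ep+\eta\beta$ with $\ep,\eta\in\{-1,0,1\}$: namely $0\mapsto(0,0)$, $\pm1\mapsto(\pm1,0)$, $\pm\beta\mapsto(0,\pm1)$, and $\pm(\beta+1)\mapsto(\pm1,\pm1)$. Given $d_0,\dots,d_{n-1}\in\Om-\Om$, write $d_i=\ep_i+\eta_i\beta$ accordingly and regroup:
\[
\sum_{i=0}^{n-1}d_i\beta^i=\sum_{i=0}^{n-1}\ep_i\beta^i+\sum_{i=0}^{n-1}\eta_i\beta^{i+1}=\sum_{i=0}^{n}c_i\beta^i,
\]
where $c_0=\ep_0$, $c_n=\eta_{n-1}$ and $c_i=\ep_i+\eta_{i-1}$ for $1\le i\le n-1$. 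Thus all $c_i\in{\mathbb Z}$, with $|c_0|,|c_n|\le 1$ and $|c_i|\le 2$ for $1\le i\le n-1$. If $\sum_{i=0}^{n-1}d_i\beta^i\ne 0$ then not all $c_i$ vanish; let $k$ be the largest index with $c_k\ne 0$. If $k=0$ the sum equals $c_0\in{\mathbb Z}\setminus\{0\}$, hence has modulus $\ge 1$. If $k\ge 1$, the triangle inequality together with the bounds on the $c_i$ gives
\[
\left|\sum_{i=0}^{n-1}d_i\beta^i\right|=\left|\sum_{i=0}^{k}c_i\beta^i\right|\ge \beta^k-2\sum_{i=1}^{k-1}\beta^i-1=\beta^k-\frac{2(\beta^k-\beta)}{\beta-1}-1,
\]
and an elementary manipulation shows this last quantity is $\ge 1$ exactly when $\beta^k(\beta-3)+2\ge 0$, which holds for all $\beta\ge 3$ and $k\ge 1$. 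Hence $\ell_\beta\ge 1$, and combined with the upper bound $\ell_\beta=1$.

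There is no serious obstacle here; the one point worth flagging is that $\beta$ is an arbitrary real number $\ge 3$, not necessarily an integer (for integer $\beta$ the statement is trivial, since $\sum d_i\beta^i$ is then a nonzero integer). The digit-splitting $d_i=\ep_i+\eta_i\beta$ is precisely the device that reduces the general case to a bound on a polynomial in $\beta$ with small integer coefficients, and I expect the verification of the final elementary inequality $\beta^k(\beta-3)+2\ge0$ to be the only genuinely computational step.
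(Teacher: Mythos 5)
Your proof is correct, but it takes a genuinely different route from the paper's. The paper proves the lower bound by induction on the degree $n$, splitting on the modulus of the leading digit $d_n$: when $|d_n|\in\{\beta,\beta+1\}$ a direct triangle-inequality estimate works, and when $|d_n|=1$ the dangerous near-cancellation $\beta^k+d_{k-1}\beta^{k-1}$ with $d_{k-1}\in\{-\beta,-(\beta+1)\}$ is handled by the carrying substitution $d_{k-1}\mapsto d_{k-1}+\beta$, which lowers the degree and invokes the induction hypothesis. You avoid induction entirely by performing that carry globally and in advance: writing each $d_i=\ep_i+\eta_i\beta$ with $\ep_i,\eta_i\in\{-1,0,1\}$ and regrouping turns the sum into $\sum_{i=0}^{n}c_i\beta^i$ with $|c_0|,|c_n|\le 1$ and $|c_i|\le 2$ in between, after which a single triangle-inequality estimate (reducing to $\beta^k(\beta-3)+2\ge 0$) finishes the proof. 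Your regrouping is exactly the non-inductive shadow of the paper's carrying step, and it yields a shorter, case-free argument; the paper's induction is perhaps more in the spirit of the digit manipulations used elsewhere in the literature on spectra, but buys nothing extra here. Both arguments use $\beta\ge 3$ only in the final numerical inequality, and both correctly treat $\beta$ as an arbitrary real $\ge 3$. Your proof is complete as written; the only detail worth making explicit is that $k$ (the top nonzero index of the $c_i$) may be strictly less than $n$, in which case the coefficient $c_k$ satisfies only $|c_k|\ge 1$ rather than $c_k=\eta_{n-1}$ — but your estimate already uses only $|c_k|\ge 1$, so nothing breaks.
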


Note that $E$ is a self-similar set having overlaps. Then a point $x\in E$ may have multiple codings. Motivated by the work of \cite{DJKL} we introduce the following subsets of $E$. For $k\in\N\cup\set{\aleph_0,  2^{\aleph_0}}$ let
\[
E^{(k)}:=\set{x\in E: x\textrm{ has precisely }k\textrm{ different codings with alphabet }\Om}.
\]
Then $E=E^{(2^\aleph_0)}\cup E^{(\aleph_0)}\cup\bigcup_{k=1}^\f E^{(k)}$.
Our final result is on the Hausdorff dimension  and Hausdorff measures of $E$ and $E^{(k)}$.

\begin{proposition}\label{prop:dim-E-U}\begin{itemize}
\item[{\rm(i).}] The Hausdorff dimensions of $E$ and $E^{(2^{\aleph_0})}$ are given by
\[
\dim_H E=\dim_H E^{(2^\aleph_0)}=\frac{\log(3+\sqrt{5})-\log 2}{\log \beta}=:s.
\]
Furthermore, $\mathcal H^s(E)=\mathcal H^s(E^{(2^\aleph_0)})\in(0, \f)$.

\item[{\rm(ii)}] If $k=\aleph_0$ or $k\in\N$ is not of the form $2^m$, then $E^{(k)}=\emptyset$. Otherwise, the set $E^{(2^m)}$ with $m\in\set{0,1,2,\ldots}$ has the same Hausdorff dimension given by
\[
\dim_H E^{(2^m)}=\frac{\log r}{\log \beta}=:t,
\]
where $r\approx 2.24698$ is the root of $x^3-2x^2-x+1=0$. Furthermore, $\mathcal H^t(E^{(1)})\in(0,\f)$, and $\mathcal H^t(E^{(2^m)})=\f$ for any $m\in\N$.

\end{itemize}
\end{proposition}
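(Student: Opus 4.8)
The plan is to reduce the computation of $\dim_H E$ to a subsystem where the complete overlap $f_{0(\beta+1)}=f_{11}$ is accounted for exactly once. Concretely, I would first show that the three maps $f_0,f_1,f_{\beta+1}$ together with the relation $f_{0(\beta+1)}=f_{11}$ produce the same attractor as an IFS obtained by ``splitting'' the overlapping branch: replace the four length-two cylinders $f_{00},f_{01},f_{10},f_{1(\beta+1)}$ and the shared cylinder $f_{11}=f_{0(\beta+1)}$, together with the three cylinders under $f_{\beta+1}$, by a graph-directed or self-similar system whose incidence/substitution matrix has characteristic behaviour governed by $x^2-3x+1$ (equivalently by the quadratic whose larger root is $(3+\sqrt5)/2$). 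The key geometric input is that, because $\beta\ge 3$, the only overlaps between level-$n$ cylinders are the ones forced by iterating $f_{0(\beta+1)}=f_{11}$ and that these overlaps are ``clean'' — two cylinders either coincide or meet in at most a point (a singleton only when $\beta=3$, as noted after Figure \ref{fig:1}). This should be established by an induction on levels, tracking which pairs of blocks in $\Omega^n$ can have intersecting images, exactly as in the analysis preceding the statement of Theorem \ref{ifs}. Once the overlap structure is pinned down, $\dim_H E = s$ with $\beta^s=(3+\sqrt5)/2$ follows: the upper bound from counting distinct level-$n$ cylinders (which grows like $\big((3+\sqrt5)/2\big)^n$ after collapsing coincidences), the lower bound by exhibiting inside $E$ a genuine self-similar subset satisfying the strong separation condition with that dimension, or by a mass-distribution argument putting the natural Bernoulli-type measure on the collapsed symbolic space. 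The finiteness and positivity of $\mathcal H^s(E)$ follows from the same two-sided estimate since the collapsed system satisfies the open set condition.

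For the identity $\dim_H E^{(2^{\aleph_0})}=s$ and $\mathcal H^s(E^{(2^{\aleph_0})})=\mathcal H^s(E)$, I would argue that $E\setminus E^{(2^{\aleph_0})}$ is small. A point fails to have a continuum of codings only if, from some level on, its orbit under the coding dynamics avoids the overlap region $f_{0(\beta+1)}(\Delta)=f_{11}(\Delta)$; the set of such points is a countable union of affine copies of sets built from a \emph{proper} subsystem of the collapsed IFS (one branch removed), hence has Hausdorff dimension strictly less than $s$. Therefore $\mathcal H^s(E\setminus E^{(2^{\aleph_0})})=0$, giving both the dimension equality and the measure equality. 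The slightly delicate point here is to check that removing the overlapping branch strictly decreases the Perron eigenvalue, which is immediate from irreducibility of the incidence matrix.

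For part (ii), the combinatorial heart is to count codings. I would set up a transducer/automaton on blocks recording, for a point $x$, the set of admissible continuations, and observe that the multiplicity of codings can only be created by the single relation $f_{0(\beta+1)}=f_{11}$; consequently each ``branch point'' in the coding tree doubles the number of codings, so the total number of codings of any $x$ is a power of $2$ (or $1$, or $2^{\aleph_0}$), which yields $E^{(k)}=\emptyset$ unless $k=1$, $k=2^m$, or $k=2^{\aleph_0}$; in particular $E^{(\aleph_0)}=\emptyset$ because finitely many independent binary choices give $2^m$ and infinitely many already give $2^{\aleph_0}$. For the dimension of $E^{(2^m)}$, note that $x\in\bigcup_m E^{(2^m)}$ exactly when $x$ has finitely many codings, i.e.\ when the orbit of $x$ enters the overlap region only finitely often; the set of such $x$ is, up to countably many affine images, the attractor of the collapsed IFS with the overlapping branch forbidden after some stage, and the relevant counting recursion for the number of distinct cylinders gives the cubic $x^3-2x^2-x+1=0$ with dominant root $r\approx2.24698$, whence $\dim_H E^{(2^m)}=t=\log r/\log\beta$ for every $m$ (the value of $m$ only changes the set by a finite ``prefix'' choice and finitely many affine pieces, not its dimension). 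Finally, $E^{(1)}$ — the univoque set — carries its natural self-similar measure with the open set condition satisfied for the ``no-overlap-ever'' subsystem, so $\mathcal H^t(E^{(1)})\in(0,\infty)$; whereas for $m\ge 1$, $E^{(2^m)}$ is a countable union of affine copies of $E^{(1)}$-like sets all of dimension $t$, and one shows by a Vitali-type argument that this countable union has infinite $\mathcal H^t$-measure (the copies accumulate without a uniform lower bound on their sizes, so no finite bound on $\mathcal H^t$ survives).

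The main obstacle I anticipate is the exact bookkeeping of the overlap structure at all levels: proving rigorously that \emph{no} coincidences or intersections among level-$n$ cylinders occur beyond those generated by $f_{0(\beta+1)}=f_{11}$, uniformly in $\beta\ge3$. This is what makes both the dimension formulas and the ``power of $2$'' statement work, and it is exactly the kind of statement where the case $\beta=3$ (where neighbouring cylinders touch) must be handled with care so that the touching does not create extra codings. Once that lemma is in hand, everything else is a matter of standard incidence-matrix Perron--Frobenius computations and routine Hausdorff-measure estimates.
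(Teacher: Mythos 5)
Your overall route coincides with the paper's: $E$ is realised as the projection of the subshift of finite type obtained by forbidding one of the two equivalent blocks (the paper forbids $0(\beta+1)$), whose transition matrix has characteristic polynomial $-x(x^2-3x+1)$ and Perron root $(3+\sqrt{5})/2$; the univoque set $U=E^{(1)}$ is the projection of the subshift forbidding both $11$ and $0(\beta+1)$, giving the cubic $x^3-2x^2-x+1$; multiple codings arise only from the substitution $11\sim 0(\beta+1)$, each occurrence doubling the count, which yields $E^{(k)}=\emptyset$ for $k\notin\set{2^m}\cup\set{2^{\aleph_0}}$ and the inclusion $E^{(k)}\subseteq\bigcup_{n}\bigcup_{\mathbf i\in\Omega^n}f_{\mathbf i}(U)$ for the upper bound; and $\mathcal H^s(E\setminus E^{(2^{\aleph_0})})=0$ because that complement lies in a countable union of copies of $U$, which has dimension $t<s$. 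The ``main obstacle'' you defer --- that the only overlaps are those forced by $f_{0(\beta+1)}=f_{11}$ --- is exactly the paper's Lemma~\ref{lem:overlap}, $f_0(E)\cap f_1(E)=f_{0(\beta+1)}(E)=f_{11}(E)$, proved by intersecting with $f_1(\Delta)$; your instinct that $\beta=3$ needs separate care there is well placed, since in that case $f_{01}(\Delta)$ and $f_{10}(\Delta)$ genuinely touch and the one-line intersection argument is delicate at the touching point.

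The one step in your sketch that does not work as stated is the claim $\mathcal H^t(E^{(2^m)})=\infty$ for $m\ge1$. You argue that $E^{(2^m)}$ is a countable union of affine copies of $U$-like sets whose sizes shrink without a uniform lower bound, ``so no finite bound on $\mathcal H^t$ survives.'' That inference is false in general: $\bigcup_{n\ge1}f_{0^n}(U)$ is exactly such a union and has $\mathcal H^t$-measure at most $\sum_{n}\beta^{-nt}\mathcal H^t(U)<\infty$. What actually forces infinite measure is a Perron--Frobenius balance, which is how the paper argues: for each $n$ one builds a subset $\Lambda_{n,m}\subseteq E^{(2^m)}$ as the union, over admissible words $d_1\ldots d_n$ of $\mathbf U$ ending in $\beta+1$, of the pieces $f_{d_1\ldots d_n(0(\beta+1))^m}(U(\beta+1))$; the $\Lambda_{n,m}$ are pairwise disjoint in $n$, the number of such words grows like $r^n=\beta^{nt}$, and each piece has measure $\beta^{-(n+2m)t}\mathcal H^t(U(\beta+1))$ with $\mathcal H^t(U(\beta+1))>0$ by irreducibility, so
\[
\mathcal H^t(\Lambda_{n,m})\ \ge\ C\,\beta^{-2mt}\,\mathcal H^t(U(\beta+1))\ >\ 0
\]
uniformly in $n$, and summing over $n$ diverges. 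Without this exact cancellation between the count of disjoint copies at level $n$ and the scaling factor $\beta^{-nt}$, the conclusion $\mathcal H^t(E^{(2^m)})=\infty$ does not follow from your description. The rest of your outline matches the paper's proof in substance.
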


The rest of the paper is arranged as follows. In the next section we discuss all the generating IFSs of $E$ and establish Theorem  \ref{ifs}. In the last Section we investigate the spectrum, the Hausdorff dimension and Hausdorff measure of $E$ and of the sets  of points in $E$ having finite or infinite different $\beta$-expansions, and prove both Proposition \ref{prop:spectrum} and Proposition
\ref{prop:dim-E-U}.

\section{Generating iterated function systems of $E$}\label{sec:generating-IFS}

 In this section we will investigate all generating iterated function systems of $E$, and prove Theorem \ref{ifs}.
First we prove  the asymmetry of $E$.
\begin{lemma}\label{notin}
$-E+c \nsubseteq E$ for any $c\in\R$.
\end{lemma}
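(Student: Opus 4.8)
The plan is first to pin down the only possible translation constant, and then to eliminate it by means of the hole $H$ of $E$.

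First I would record the extreme values of $E$: since $E=\set{\sum_{i\ge 1}d_i\beta^{-i}:d_i\in\Om}$, we have $0=\min E$ (coding $0^\f$) and $\ga_\beta=\sum_{i\ge1}(\beta+1)\beta^{-i}=\max E$ (coding $(\beta+1)^\f$), both attained. If $-E+c\subseteq E$, then because $E\subseteq\Delta=[0,\ga_\beta]$ while $-E+c$ has minimum $c-\ga_\beta$ and maximum $c$, we are forced to have $c-\ga_\beta\ge 0$ and $c\le\ga_\beta$, i.e.\ $c=\ga_\beta$. So the lemma reduces to showing $\ga_\beta-E\nsubseteq E$, that is, $E$ is not symmetric about the midpoint $\ga_\beta/2$ of its convex hull.

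Next I would argue by contradiction, assuming $\ga_\beta-E\subseteq E$. The key point is that under this assumption the reflection $x\mapsto\ga_\beta-x$ carries holes of $E$ to sets disjoint from $E$: if $E\cap(u,v)=\emptyset$ then $E\cap(\ga_\beta-v,\ga_\beta-u)=\emptyset$, since any $y$ in the latter set would satisfy $\ga_\beta-y\in(u,v)$ and at the same time $\ga_\beta-y\in\ga_\beta-E\subseteq E$. I apply this with the hole $H=(f_1(\ga_\beta),f_{\beta+1}(0))$, for which $E\cap H=\emptyset$ because $E\subseteq\Delta_1=\Delta\setminus H$. Using $\ga_\beta=(\beta+1)/(\beta-1)$ a direct computation gives $f_1(\ga_\beta)=\tfrac{2}{\beta-1}$, $f_{\beta+1}(0)=\tfrac{\beta+1}{\beta}$, $\ga_\beta-f_{\beta+1}(0)=\tfrac{\ga_\beta}{\beta}$ and $\ga_\beta-f_1(\ga_\beta)=1$, so the contradiction hypothesis forces $E\cap\bigl(\tfrac{\ga_\beta}{\beta},1\bigr)=\emptyset$.

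Finally I would produce an explicit point of $E$ inside this interval. Take $x:=f_{1(\beta+1)}(0)=\tfrac{2\beta+1}{\beta^2}$, which belongs to $E$ since it has coding $1(\beta+1)0^\f\in\Om^\N$; both $\tfrac{\ga_\beta}{\beta}<x$ and $x<1$ are equivalent to $\beta^2-2\beta-1>0$, which holds for all $\beta\ge 3$. This contradicts the previous step, so $\ga_\beta-E\nsubseteq E$, and combined with the reduction above we conclude $-E+c\nsubseteq E$ for every $c\in\R$. I do not expect any genuine obstacle here: the only mild subtlety is the choice of test point, since the endpoint $f_1(\ga_\beta)=\tfrac{2}{\beta-1}$ of $H$ itself lies in $E$ but fails to be $<1$ when $\beta=3$, so one has to descend one level and use $f_{1(\beta+1)}(0)$ to obtain an estimate uniform in $\beta$.
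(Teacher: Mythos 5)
Your proof is correct and follows essentially the same route as the paper: the same reduction to $c=\ga_\beta$ via the extreme points $0$ and $\ga_\beta$, and the same contradiction obtained from the point $f_{1(\beta+1)}(0)=\tfrac{2\beta+1}{\beta^2}$ and the hole $H=(f_1(\ga_\beta),f_{\beta+1}(0))$; reflecting the hole rather than the point is only a cosmetic rephrasing of the identical computation. No gaps.
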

\begin{proof}Suppose on the contrary that $-E+c\subseteq E$ for some $c\in\R$. Recall that $0$ is the minimum of $E$, and $\ga_\beta$ is the maximum of $E$. Then \begin{equation}\label{eq:kong-1}
-E+c\subseteq \left[c-\ga_\beta,c\right]\quad \textrm{with}\quad c-\ga_\beta\textrm{ and }c\in E.
\end{equation}
If $c\ne \ga_\beta$, then
one of $c$ and $\ga_\beta-c$ is not in $[0, \ga_\beta]$, therefore is not in $E$, leading to a contradiction with (\ref{eq:kong-1}).
So we must have  $c=\ga_\beta$.
Then
\begin{equation}\label{eq:kong-2}-E+\ga_\beta\subseteq E.
\end{equation}

However, by using $\beta\ge 3$ it is easy to check that
\[-f_{1(\beta+1)}(0)+\ga_\beta=-\frac{2\beta+1}{\beta^2}+\frac{\beta+1}{\beta-1}\in \left(\frac{2}{\beta-1},\frac{\beta+1}{\beta}\right)=H,\] where $H$ is the largest  hole of $E$. This leads to a contradiction with (\ref{eq:kong-2}). Therefore, $-E+c\nsubseteq E$ for any $c\in\R$.
\end{proof}

In the next lemma we show that for an affine map $g(x)=\mu x+b$, if $g(E)=\mu E+b\subseteq E$, then $g(E)$ can not intersect both $f_1(E)$ and $f_{\beta+1}(E)$.


\begin{lemma}\label{sep}
If $g(E)\subseteq E$, then either $g(E)\subseteq f_0(E)\cup f_1(E)$ or $g(E)\subseteq f_{\beta+1}(E)$.
\end{lemma}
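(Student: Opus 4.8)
The plan is to exploit the structure of the three level-one basic intervals $f_0(\Delta), f_1(\Delta), f_{\beta+1}(\Delta)$ together with the location of the largest hole $H=(f_1(\ga_\beta), f_{\beta+1}(0))$. Recall that $f_0(\Delta)$ and $f_1(\Delta)$ overlap (since $\beta\ge 3$), while $f_{\beta+1}(\Delta)$ is separated from $f_0(\Delta)\cup f_1(\Delta)$ by the hole $H$; concretely $f_0(E)\cup f_1(E)\subseteq[0,f_1(\ga_\beta)]$ and $f_{\beta+1}(E)\subseteq[f_{\beta+1}(0),\ga_\beta]$, with $f_1(\ga_\beta)<f_{\beta+1}(0)$. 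So it suffices to show that $g(E)$ cannot ``straddle'' the hole $H$, i.e. cannot contain points both in $[0,f_1(\ga_\beta)]$ and in $[f_{\beta+1}(0),\ga_\beta]$.

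First I would write $g(x)=\mu x+b$ and suppose for contradiction that $g(E)$ meets both $f_1(E)$ (or $f_0(E)$) and $f_{\beta+1}(E)$. Since $g$ is affine and $E$ is compact with $g(E)\subseteq E$, the image $g(E)$ is a scaled copy of $E$ sitting inside $E$; in particular $g(\Delta)=[g(0),g(\ga_\beta)]$ or $[g(\ga_\beta),g(0)]$ is the convex hull of $g(E)$, and it must be contained in the convex hull $\Delta=[0,\ga_\beta]$ of $E$. The key point is that the hole $H\subseteq\Delta$ is disjoint from $E$, hence disjoint from $g(E)$; thus $g^{-1}(H)$ is an open interval disjoint from $E$ and contained in $\Delta$ — that is, $g^{-1}(H)$ is contained in a hole of $E$ (or in the complement of $\Delta$, but since it lies in $\Delta$ it must lie in an actual hole). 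Now $|H| = f_{\beta+1}(0)-f_1(\ga_\beta) = \frac{\beta+1}{\beta} - \frac{2}{\beta-1}$, and $|g^{-1}(H)| = |H|/|\mu|$. If $|\mu|\le 1$ this forces a hole of $E$ of length at least $|H|$; but $H$ is the \emph{largest} hole, and one checks that $H$ itself is the unique hole of length $|H|$ that is contained in $\Delta$ in the required position — more carefully, if $g(E)$ straddles $H$ then $g^{-1}(H)$ separates $g^{-1}(f_0(E)\cup f_1(E))$ from $g^{-1}(f_{\beta+1}(E))$ inside $E$, so $g^{-1}(H)$ must contain the hole $H$, giving $|H|\le|H|/|\mu|$, i.e. $|\mu|\le1$, and equality analysis pins down $g^{-1}(H)=H$, whence $\mu=1$ and $b=0$, so $g=\mathrm{id}$ — but then $g(E)=E$ does not straddle $H$ in the sense of meeting $f_{\beta+1}(E)$ and being disjoint from $H$ while also... (this degenerate case is handled by noting $g=\mathrm{id}$ gives $g(E)\subseteq f_0(E)\cup f_1(E)\cup f_{\beta+1}(E)$, which is fine, contradiction with the assumed straddling since $E$ genuinely meets all three but is not a single scaled copy landing across the hole — here one must instead argue that a \emph{connected-hull} copy $g(E)$ cannot have its hull containing $H$ and nothing else, because $E$'s hull contains infinitely many holes $H_{\bf i}$, forcing more holes inside $g^{-1}(H)$). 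If instead $|\mu|>1$, then $g$ is expanding and $g^{-1}$ contracts $\Delta$ into itself; iterating $g^{-1}$ would shrink toward a fixed point, and one derives that $g^{-1}(E)\supseteq E$ is impossible unless... — actually the cleaner route is: $g(E)\subseteq E$ with $\mathcal H^s(E)\in(0,\f)$ forces $|\mu|\le1$ by the dimension/measure comparison $\mathcal H^s(g(E)) = |\mu|^s\mathcal H^s(E)\le\mathcal H^s(E)$, so we may assume $|\mu|\le1$ from the outset.

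The cleanest organization, then, is: (1) by Proposition \ref{prop:dim-E-U}(i), $0<\mathcal H^s(E)<\f$, so $g(E)\subseteq E$ forces $|\mu|\le1$; (2) consequently $g^{-1}$ maps the hole $H$ to an open interval $g^{-1}(H)$ of length $\ge|H|$ which is disjoint from $E$; (3) since $g(E)$ straddles $H$ by assumption, $g^{-1}(H)$ lies between $g^{-1}(f_0(E)\cup f_1(E))$ and $g^{-1}(f_{\beta+1}(E))$, two nonempty subsets of $E$, so $g^{-1}(H)\subseteq\Delta$ and $g^{-1}(H)$ is a hole of $E$ of length $\ge |H|$; (4) since $H$ is the largest hole and the only hole of $E$ of maximal length with its specific endpoints' shape, conclude $g^{-1}(H)=H$ and $\mu=\pm1$; (5) rule out $\mu=-1$ by Lemma \ref{notin} (asymmetry of $E$), and $\mu=1$ forces $g=\mathrm{id}$, which does not straddle (since $E$ is not contained in the closed complement of $H$ minus... wait) — here $g=\mathrm{id}$ means $g(E)=E$, which indeed meets both $f_1(E)$ and $f_{\beta+1}(E)$, contradiction with the assumed ``either/or'' failing would not arise; the resolution is that for $g=\mathrm{id}$ the statement ``either $g(E)\subseteq f_0(E)\cup f_1(E)$ or $g(E)\subseteq f_{\beta+1}(E)$'' is genuinely false, so $g=\mathrm{id}$ cannot actually be reached — and it is not, because step (3) applied to $g=\mathrm{id}$ says $H$ must lie strictly between two parts of $E$, which is automatic, so this approach needs the extra input that $g(E)$ is a \emph{single} similar copy, i.e. its hull contains at most the holes that are images under $g$ of holes of $E$; since $H$ is the largest hole of $E$, the largest hole inside $g(\Delta)$ is $g(H)$, of length $|\mu||H|\le|H|$, so $g^{-1}$ of the largest hole of $E$ lying inside $g(\Delta)$ has length $\le|H|$, contradicting step (2) unless $|\mu|=1$ and $g(H)=H$.

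I expect the main obstacle to be step (4)–(5): carefully verifying that the largest hole $H$ is \emph{uniquely} positioned — that no other hole of $E$, nor any affine image configuration, can have length $\ge|H|$ while separating two parts of $E$ in the way required — and handling the borderline $\beta=3$ case where $f_{01}(\Delta)\cap f_{10}(\Delta)$ is a single point. The asymmetry Lemma \ref{notin} is exactly what kills the orientation-reversing case $\mu=-1$, and the finite positive Hausdorff measure from Proposition \ref{prop:dim-E-U} is what a priori bounds $|\mu|\le 1$; with these two inputs in hand the argument reduces to the combinatorial/metric bookkeeping about holes sketched above.
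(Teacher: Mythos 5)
Your core idea---that if $g(E)$ straddled the largest hole $H$ then $g^{-1}(H)$ would be an open interval disjoint from $E$, contained in $\Delta$, of length $|H|/|\mu|$, hence contained in a hole of $E$ of that length or more, which is impossible for $|\mu|<1$ because $H$ is the largest hole (every other hole in fact has length at most $|H|/\beta$)---is sound and genuinely different from what the paper does. The paper's proof is a two-line appeal to uniqueness of attractors: under the straddling assumption one checks that \emph{both} $E$ and $\Delta=[0,\gamma_\beta]$ are invariant under, hence equal to the attractor of, the augmented IFS $\{f_0,f_1,f_{\beta+1},g\}$, because $g(\Delta)$ is a subinterval of $\Delta$ covering $H$ while $f_0(\Delta)\cup f_1(\Delta)\cup f_{\beta+1}(\Delta)=\Delta\setminus H$; so $E=\Delta$, a contradiction. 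The attractor argument dispenses with all hole bookkeeping; your argument is more elementary and makes the metric role of $H$ explicit.

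Where your write-up actually breaks down is the endgame around $|\mu|=1$. You correctly sense that $g=\mathrm{id}$ is problematic---as literally stated the lemma is false for $g=\mathrm{id}$, since $E$ is contained in neither $f_0(E)\cup f_1(E)$ nor $f_{\beta+1}(E)$---but your several attempts to argue around it (the ``largest hole inside $g(\Delta)$'' digression) never converge. The resolution is that $g$ is implicitly a \emph{contractive} similitude, $0<|\mu|<1$: this is the only regime in which the lemma is applied in Section 2, and the paper's own proof also needs contractivity for attractor uniqueness. Once $|\mu|<1$ is assumed, your step (3) already produces a hole of length $|H|/|\mu|>|H|$ and the proof terminates there; steps (4)--(5), the appeal to Lemma \ref{notin}, and the appeal to Proposition \ref{prop:dim-E-U} (which is proved only later in the paper and is overkill anyway, since $|\mu|\le 1$ follows from comparing diameters of $g(E)$ and $E$) are all unnecessary. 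State the hypothesis $0<|\mu|<1$ up front and delete everything after step (3).
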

\begin{proof}It suffices to prove that $g(E)$ can not intersect both $f_0(E)\cup f_1(E)$ and $f_{\beta+1}(E)$.
Conversely, assume that
\begin{equation*}
g(E)\cap (f_0(E)\cup f_1(E))\ne\emptyset\quad{\textrm{and}}\quad g(E)\cap f_{\beta+1}(E)\ne \emptyset.
\end{equation*}
Then both $E$ and $\Delta=[0, \ga_\beta]$ are the attractors of the IFS $\{f_0,f_1,f_{\beta+1},g\}$. So they should be the same, leading to a contradiction.
\end{proof}

The following lemma  plays an important role in the proof of Theorem \ref{ifs}. For a compact set $A\subseteq \R$ denote by $A_{\max}$ and $A_{\min}$ the largest and smallest elements of $A$, respectively. Then $E_{\min}=0$ and $E_{\max}=\ga_\beta$.
\begin{lemma}\label{belong}
Assume that $\mu E+b\subseteq E$.
\begin{itemize}
\item[{\rm(i)}] If $(\mu E+b)_{\max}<f_{10}(0)$, then $\mu E+b\subseteq f_0(E)$;

\item[{\rm(ii)}] If
$f_{01}(\ga_\beta)<(\mu E+b)_{\min}< f_{\beta+1}(0)$, then $\mu E+b\subseteq f_1(E)$;

\item[{\rm(iii)}] If $(\mu E+b)_{\min}> f_{1}(\ga_\beta)$, then $\mu E+b\subseteq f_{\beta+1}(E)$.
\end{itemize}
\end{lemma}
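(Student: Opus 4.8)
The plan is to handle the three parts uniformly by reducing each to the question of which first‑level piece $f_0(E), f_1(E), f_{\beta+1}(E)$ can contain a point of $E$ lying in a prescribed range; the sign of $\mu$ plays no role, only the location of the compact set $\mu E+b$. The basic data are the decomposition $E=f_0(E)\cup f_1(E)\cup f_{\beta+1}(E)$ together with the elementary identities $\min f_1(E)=f_1(0)=f_{10}(0)=1/\beta$, $\min f_{\beta+1}(E)=f_{\beta+1}(0)=(\beta+1)/\beta$, and $\max\bigl(f_0(E)\cup f_1(E)\bigr)=f_1(\ga_\beta)=2/(\beta-1)$, the last because $\max f_0(E)=\ga_\beta/\beta\le 2/(\beta-1)$ is just the inequality $\beta\ge 1$.

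Part (i) is then immediate: every point of $\mu E+b$ is strictly smaller than $f_{10}(0)=1/\beta=\min f_1(E)\le\min f_{\beta+1}(E)$, so $\mu E+b$ meets neither $f_1(E)$ nor $f_{\beta+1}(E)$; since $\mu E+b\subseteq E=f_0(E)\cup f_1(E)\cup f_{\beta+1}(E)$, this forces $\mu E+b\subseteq f_0(E)$. Part (iii) is symmetric: every point of $\mu E+b$ exceeds $f_1(\ga_\beta)=\max\bigl(f_0(E)\cup f_1(E)\bigr)$, so $\mu E+b$ misses $f_0(E)\cup f_1(E)$ and hence lies in $f_{\beta+1}(E)$.

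Part (ii) is where the complete overlap enters, and I expect it to be the only real difficulty, since $f_0(E)$ and $f_1(E)$ genuinely overlap, so no single cut point separates them. First, the upper hypothesis $(\mu E+b)_{\min}<f_{\beta+1}(0)=\min f_{\beta+1}(E)$ excludes the possibility $\mu E+b\subseteq f_{\beta+1}(E)$, so Lemma \ref{sep} gives $\mu E+b\subseteq f_0(E)\cup f_1(E)$. Now peel off one more level: $f_0(E)=f_{00}(E)\cup f_{01}(E)\cup f_{0(\beta+1)}(E)$, and by $f_{0(\beta+1)}=f_{11}$ we have $f_{0(\beta+1)}(E)=f_{11}(E)\subseteq f_1(E)$, whence $f_0(E)\cup f_1(E)=f_{00}(E)\cup f_{01}(E)\cup f_1(E)$. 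A one‑line computation gives $\max\bigl(f_{00}(E)\cup f_{01}(E)\bigr)=f_{01}(\ga_\beta)=2/(\beta(\beta-1))$ (the point being $\max f_{00}(E)=\ga_\beta/\beta^2<f_{01}(\ga_\beta)$, i.e.\ $\beta+1<2\beta$). Since the lower hypothesis forces every point of $\mu E+b$ to be strictly above $f_{01}(\ga_\beta)$, the set $\mu E+b$ is disjoint from $f_{00}(E)\cup f_{01}(E)$, and therefore $\mu E+b\subseteq f_1(E)$, as claimed. The only things to check carefully are the handful of polynomial inequalities in $\beta$ (all valid for $\beta\ge 3$) guaranteeing that the basic intervals sit in the order suggested by Figure \ref{fig:1}; beyond that the argument is pure bookkeeping with $E=f_0(E)\cup f_1(E)\cup f_{\beta+1}(E)$, Lemma \ref{sep}, and the relation $f_{0(\beta+1)}=f_{11}$.
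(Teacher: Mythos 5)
Your proof is correct and follows essentially the same route as the paper: (i) and (iii) are the immediate endpoint comparisons (which the paper dismisses as obvious from Figure \ref{fig:1}), and (ii) is handled exactly as in the paper by combining Lemma \ref{sep} with the identity $f_{0(\beta+1)}=f_{11}$ to absorb the overlapping piece of $f_0(E)$ into $f_1(E)$. Your version merely makes explicit the second-level decomposition $f_0(E)=f_{00}(E)\cup f_{01}(E)\cup f_{0(\beta+1)}(E)$ and the inequality $\max\bigl(f_{00}(E)\cup f_{01}(E)\bigr)=f_{01}(\ga_\beta)$, which the paper leaves implicit in its intersection with $f_1(\Delta)$.
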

\begin{proof}
In view of Figure \ref{fig:1}, (i) and (iii) are obvious. We only need to prove (ii).

Suppose $\mu E+b\subseteq E$ and $f_{01}(\ga_\beta)<(\mu E+b)_{\min}< f_{\beta+1}(0)$. By Lemma  \ref{sep} and $f_{0(\beta+1)}=f_{11}$ it follows that
\begin{align*}
\mu E+b&\subseteq (f_0(E)\cup f_1(E))\cap f_1(\Delta)\\&=(f_{0(\beta+1)}(E)\cap f_1(\Delta))\cup f_1(E)=(f_{11}(E)\cap f_1(\Delta))\cup f_{1}(E)\subseteq f_1(E).
\end{align*}
This completes our proof.
\end{proof}

\subsection{Determination of $\mu$ for $\mu E+b\subseteq E$}
Assume $\mu E+b\subseteq E$. Since $E$ is a compact set, it follows that $|\mu|\le 1$. In view of Lemma \ref{notin} it is only necessary to consider
$0<|\mu|<1$ and $b\in\R$. We will show that the only possibility is $\mu=\beta^{-n}$ for some $n\in\N$.
\begin{proposition}\label{prop:contration-ratio}
If $g(E)=\mu E+b\subseteq E$ with $0<|\mu|<1$ and $b\in\R$, then $\mu=\beta^{-n}$ for some  $n\in\N$.
\end{proposition}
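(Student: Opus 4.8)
The plan is to show that $\mu$ must be a negative integer power of $\beta$ by first ruling out $\mu<0$, and then using the self-similar structure to force $|\mu|=\beta^{-n}$. The case $\mu<0$ is immediate from Lemma \ref{notin}: if $\mu<0$ then writing $\mu=-|\mu|$ we would have $-|\mu|E+b\subseteq E$, and since $|\mu|E$ is a scaled copy of $E$ whose convex hull is $[0,|\mu|\ga_\beta]$, a short argument (or a direct appeal to the asymmetry already established) shows $-E+c\subseteq E$ for a suitable $c$, contradicting Lemma \ref{notin}. Hence we may assume $0<\mu<1$.

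For $0<\mu<1$, the idea is to repeatedly ``zoom in'' using Lemma \ref{belong} until the image $\mu E+b$ is forced to land inside a level-$n$ cylinder $f_{\bf i}(E)$ of the \emph{same diameter}, at which point comparing diameters gives $\mu=\beta^{-n}$. More precisely, I would proceed by induction on the ``scale'' of $\mu$. Since $0<\mu<1$, there is a unique $n\ge 1$ with $\beta^{-n}\le\mu<\beta^{-(n+1)}$... wait, rather: with $\beta^{-n}\le \mu<\beta^{-n+1}$, so that $\mathrm{diam}(\mu E+b)=\mu\ga_\beta$ lies in $[\beta^{-n}\ga_\beta,\beta^{-n+1}\ga_\beta)$. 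The key claim to establish is that $\mu E+b$ must then be contained in some level-$(n-1)$ cylinder $f_{\bf i}(E)$ with $|{\bf i}|=n-1$. Granting this, $f_{\bf i}^{-1}(\mu E+b)=\beta^{n-1}\mu E+b'\subseteq E$ with contraction ratio $\mu':=\beta^{n-1}\mu\in[\beta^{-1}\ga_\beta/\ga_\beta,\ga_\beta\cdot\ldots)$—more carefully $\mu'\in[\beta^{-1},1)$—and by Lemma \ref{sep} together with Lemma \ref{belong} one shows that a contraction of $E$ into $E$ with ratio in $[\beta^{-1},1)$ must actually have ratio exactly $\beta^{-1}$ (it cannot overlap two of the three top cylinders, and the only cylinder of the right size is a level-one one), whence $\mu=\beta^{-n}$.

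The substantive work, and the main obstacle, is the containment claim: that $\mu E+b$ with diameter in $[\beta^{-n}\ga_\beta,\beta^{-n+1}\ga_\beta)$ sits inside a single level-$(n-1)$ cylinder. Here one argues inductively: by Lemma \ref{sep}, $\mu E+b$ lies in $f_0(E)\cup f_1(E)$ or in $f_{\beta+1}(E)$. In the latter case we have descended one level. In the former case the danger is that $\mu E+b$ straddles the overlap region of $f_0(E)$ and $f_1(E)$, i.e. meets both $f_{01}(E)$-ish part and $f_{10}(E)$-ish part; but because $\beta\ge 3$ the gap structure (the hole $H$ and its images) constrains where a set avoiding all holes can sit. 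Concretely, if $\mu E+b$ is not contained in $f_0(E)$ nor in $f_1(E)$, then since its diameter is less than $\beta^{-n+1}\ga_\beta\le\beta^{-1}\ga_\beta = f_0(\ga_\beta)$... one needs to check the set cannot span from inside $f_0(E)$ across $f_1(E)$, using the explicit positions $f_{01}(\ga_\beta)$, $f_{10}(0)$, $f_1(\ga_\beta)$ and that $E$ contains no point in $H_{\bf j}$ for appropriate short words ${\bf j}$. This is the place where the non-total-self-similarity of $E$ (the fact that $H_{\bf i}$ need not be a hole, e.g. $H_0$) makes the bookkeeping delicate, so I expect to split into cases according to which of the intervals $[0,f_{10}(0)]$, $[f_{01}(\ga_\beta),f_{\beta+1}(0)]$, $[f_1(\ga_\beta),\ga_\beta]$ contains $(\mu E+b)_{\min}$ and apply the corresponding part of Lemma \ref{belong}, then iterate. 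Once the set is localized inside a cylinder at each stage, stripping off one similarity at a time and tracking that the renormalized ratio stays below $1$ but the diameter shrinks by exactly $\beta$ each time yields, after $n-1$ steps, a ratio in $[\beta^{-1},1)$ which the base case pins to $\beta^{-1}$.
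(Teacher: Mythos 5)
Your overall strategy---renormalize by a factor of $\beta$ at each step until the contraction ratio lies in $[\beta^{-1},1)$, then pin it to $\beta^{-1}$---is the same skeleton as the paper's (Lemma \ref{mu2} iterated, with Lemma \ref{mu1} as the base case), but two of your steps contain genuine gaps. The first is your dismissal of $\mu<0$. From $-|\mu|E+b\subseteq E$ one cannot deduce $-E+c\subseteq E$ for any $c$: Lemma \ref{notin} concerns only the \emph{unscaled} reflection of $E$, and there is no way to ``undo'' the scaling inside an inclusion. A small reflected copy of $E$ could a priori fit inside $E$ even though the full-size reflection cannot. The paper must exclude negative ratios by carrying the sign through the entire renormalization: Lemma \ref{mu1} devotes a separate argument (several successive applications of Lemma \ref{belong} ending in an appeal to Lemma \ref{notin}) just to rule out $\mu=-1/\beta$, and the possibility $\mu<0$ persists all the way into Lemma \ref{mu2}, where negative ratios are only eliminated at the very end by boosting them into the excluded range $(-1,-1/\beta]$. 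This step cannot be waved away in one line.

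The second gap is your ``containment claim''---that $\mu E+b$ with $\beta^{-n}\le\mu<\beta^{-n+1}$ must lie in a single level-$(n-1)$ cylinder $f_{\bf i}(E)$. This is both unproven and stronger than what is actually available: since $f_0(\Delta)\cap E=f_{00}(E)\cup f_{01}(E)\cup f_{10}(E)\cup f_{11}(E)$ strictly contains $f_0(E)$, the set $\mu E+b$ can sit inside $f_0(\Delta)\cap E$ while straddling the overlap $f_{11}(E)=f_{0(\beta+1)}(E)$, hence lie in neither $f_0(E)$ nor $f_1(E)$, and Lemma \ref{belong} does not apply. The paper does not establish your containment; it proves only the weaker statement (Lemma \ref{mu2}) that \emph{some translate} $\beta\mu E+c$ lies in $E$, where $c$ is produced from $b$ by iterating the affine maps $T(x)=\beta x+\mu\beta+\mu-1$ and $S(x)=\beta x+\mu$, showing the orbit must eventually escape the problematic window, and separately excluding the exceptional value $\mu=1/(\beta(\beta+1))$ in Case (2C). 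Your base case is likewise under-justified: $\mu E+b\subseteq f_0(E)\cup f_1(E)$ gives only $\mu\le 2/(\beta+1)$ by comparing convex hulls, which is weaker than $\mu\le 1/\beta$ when $\beta=3$; the paper needs the hole-covering argument (the image of the largest hole $H$ must cover $H_1$) to close this. So while your outline correctly identifies where the difficulty lies, the proof as proposed does not go through without essentially reconstructing Lemmas \ref{mu1}--\ref{mu2}.
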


The proof of Proposition \ref{prop:contration-ratio}  will be split into several lemmas.
\begin{lemma}\label{mu1}
Let $0<|\mu|<1$. If $\mu E+b\subseteq E$, then  $0<|\mu|<1/\beta$ or $\mu=1/\beta$.
\end{lemma}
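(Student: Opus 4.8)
\textbf{Proof proposal for Lemma \ref{mu1}.}
The plan is to rule out the range $1/\beta < |\mu| < 1$ by a diameter-and-covering argument, using that $\mu E + b \subseteq E$ forces the image $\mu E + b$ to fit inside the union of level-$1$ (or level-$2$) basic intervals of $E$, together with the fact that $E$ has a large central hole $H = (f_1(\ga_\beta), f_{\beta+1}(0))$. First I would record the two elementary facts: the diameter of $E$ is $\ga_\beta = (\beta+1)/(\beta-1)$, so the diameter of $\mu E + b$ is $|\mu|\ga_\beta$; and $\mu E + b$, being a translate of a scaled copy of the connected-hull geometry of $E$, is a subset of $E \subseteq \Delta_1 = f_0(\Delta) \cup f_1(\Delta) \cup f_{\beta+1}(\Delta)$, where $f_0(\Delta) = [0, \ga_\beta/\beta]$, $f_1(\Delta) = [1/\beta, (1+\ga_\beta)/\beta]$ and $f_{\beta+1}(\Delta)$ sits far to the right.

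Next I would split according to which level-$1$ pieces $\mu E + b$ meets. By Lemma \ref{sep}, either $\mu E + b \subseteq f_0(E) \cup f_1(E)$ or $\mu E + b \subseteq f_{\beta+1}(E)$. In the second case $\mu E + b \subseteq f_{\beta+1}(E)$, which has diameter $\ga_\beta/\beta$, so $|\mu|\ga_\beta \le \ga_\beta/\beta$, giving $|\mu| \le 1/\beta$ immediately (and if equality holds we can push to $\mu = 1/\beta$ by matching endpoints, since $f_{\beta+1}$ is orientation-preserving and $\mu E+b$ would have to equal $f_{\beta+1}(E)$ exactly up to reflection — and reflection is excluded by Lemma \ref{notin}). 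In the first case $\mu E + b \subseteq f_0(E) \cup f_1(E) \subseteq f_0(\Delta) \cup f_1(\Delta) = [0, (1+\ga_\beta)/\beta]$, an interval of length $(1+\ga_\beta)/\beta$. This only gives $|\mu| \le (1+\ga_\beta)/(\beta\ga_\beta) = (1 + 1/\ga_\beta)/\beta$, which exceeds $1/\beta$, so I need to work one level deeper. The point is that $f_0(E) \cup f_1(E)$ is not an interval: inside $f_1(\Delta)$ it has the hole $f_1(H)$, and $f_0(E)$ does not reach up into the right part of $f_1(\Delta)$ beyond $f_{01}(\ga_\beta)$. So $f_0(E)\cup f_1(E)$ is contained in $[0, f_{01}(\ga_\beta)] \cup [f_{10}(0), (1+\ga_\beta)/\beta] \cup (\text{right part of } f_1(E))$; more usefully, by applying Lemma \ref{belong}(i)–(ii) to the connected image $\mu E + b$ I can decide whether it lands in $f_0(E)$, in $f_1(E)$, or straddles the overlap region $[f_{10}(0), f_{01}(\ga_\beta)]$ around the coincidence $f_{0(\beta+1)} = f_{11}$. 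In the first two subcases the diameter bound gives $|\mu|\ga_\beta \le \ga_\beta/\beta$ at once; in the straddling subcase I would observe that $\mu E + b$ must then contain the whole overlap interval $[f_{10}(0), f_{01}(\ga_\beta)]$ in its convex hull, yet $\mu E + b \subseteq E$ while this interval's neighbourhood inside $E$ is constrained — iterating the argument one more level (the overlap piece $f_{0(\beta+1)}(E) = f_{11}(E)$ again has its own central hole) forces $|\mu| \le 1/\beta$ as well, after which the equality case $|\mu| = 1/\beta$ is again handled by an endpoint-matching argument together with Lemma \ref{notin} to exclude the reflected map.

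I expect the main obstacle to be the bookkeeping in the ``straddling'' subcase, where $\mu E + b$ overlaps both $f_0(E)$ and $f_1(E)$: there the crude diameter bound is not immediately strong enough, and one has to exploit that the only overlap of $f_0(E)$ with $f_1(E)$ occurs through the complete-overlap relation $f_{0(\beta+1)} = f_{11}$, so that a connected set meeting both sides near the overlap is essentially trapped in $f_1(\Delta)$ (indeed in $f_1(E)$, by Lemma \ref{belong}(ii) once $(\mu E+b)_{\min} > f_{01}(\ga_\beta)$ — and if $(\mu E+b)_{\min} \le f_{01}(\ga_\beta)$ one uses that the max must then be small because $E \cap (f_{01}(\ga_\beta), f_{10}(0))$ is the hole $f_1(H)$ translated, so the set cannot span it). Once one is confined to a single level-$1$ (or level-$2$) piece similar to $E$ with ratio $1/\beta$ (or $1/\beta^2$), the diameter inequality closes the argument and the equality analysis is routine.
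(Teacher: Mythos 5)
Your overall skeleton (split via Lemma \ref{sep}, then diameter bounds plus Lemma \ref{belong}) matches the paper's, and your treatment of the case $\mu E+b\subseteq f_{\beta+1}(E)$ is fine (you even rule out $\mu=-1/\beta$ there via Lemma \ref{notin}, which the paper leaves implicit). However, the two places you defer to ``routine'' arguments are exactly where the real work lies, and both are wrong as written. First, the claim that $\mu E+b$ ``cannot span'' a hole of $E$ is false: a scaled copy of $E$ inside $E$ can have points on both sides of a hole of $E$ provided one of its \emph{own} gaps covers that hole --- e.g.\ $f_0(E)\subseteq E$ spans the hole $(f_{01}(\ga_\beta),f_{10}(0))$ of $E$ via its gap $H_0$. (You also misidentify that hole as ``$f_1(H)$ translated'' and call $[f_{10}(0),f_{01}(\ga_\beta)]$ an overlap region; in fact $f_{01}(\ga_\beta)\le f_{10}(0)$ for $\beta\ge 3$, and the genuine overlap $f_0(E)\cap f_1(E)=f_{0(\beta+1)}(E)=f_{11}(E)$ lies to the right of $f_{10}(\Delta)$.) The correct dichotomy, which the paper uses, is: either some gap of $\mu E+b$ covers $H_1=f_1(H)$, in which case that gap must be $\mu H+b$ (all other holes of $\mu E+b$ have length at most $|\mu|A/\beta<A/\beta=|H_1|$), so the portion of $\mu E+b$ to the right of the gap is trapped in $f_{1(\beta+1)}(E)$ and $|\mu|\le 1/\beta$ follows by comparing diameters; or else $\mu E+b$ lies in $f_{1(\beta+1)}(E)$ or in $f_0(\Delta)\cap E$, and $f_0(\Delta)$ has length $\ga_\beta/\beta$, again giving $|\mu|\le 1/\beta$.

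Second, and more seriously, excluding $\mu=-1/\beta$ in the subcase $\mu E+b\subseteq f_0(\Delta)\cap E$ is not ``routine endpoint matching plus Lemma \ref{notin}''. Matching convex hulls does force $b=\ga_\beta/\beta$, but you cannot then cancel an $f_0$ and invoke Lemma \ref{notin}, because $f_0(\Delta)\cap E$ is strictly larger than $f_0(E)$ (it also contains $f_{10}(E)$ and $f_{11}(E)$), so $-E/\beta+\ga_\beta/\beta\subseteq E$ does not reduce to $-E+\ga_\beta\subseteq E$. This is the crux of the lemma: the paper bootstraps by applying Lemma \ref{belong}(ii) twice and then (iii), tracking the translation parameter at each step, until it reaches an honest inclusion of the form $-E+c\subseteq E$, which contradicts Lemma \ref{notin}. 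Without that chain (or a substitute for it) your argument is incomplete at its hardest point.
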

\begin{proof}
Suppose $\mu E+b\subseteq E$.  Then by Lemma \ref{sep} we have
\[\textrm{either}\quad \mu E+b\subseteq \frac{E+\beta+1}{\beta}\quad\textrm{or}\quad  \mu E+b\subseteq \frac{E}\beta\cup
\frac{E+1}{\beta}.\]
 In the first case, by observing the lengths of the convex hulls on both sides of the inclusion we have $0<|\mu|\le  1/\beta$. In the following we will discuss the latter case
 \begin{equation}\label{eq:kong-5}
 \mu E+b\subseteq \frac{E}\beta\cup
\frac{E+1}{\beta}=f_0(E)\cup f_{1}(E).
\end{equation}

Note that each hole of $\mu E+b$ is mapped by a hole of $E$, and the largest hole $H$ of $E$ has length $A:=f_{\beta+1}(0)-f_1(\ga_\beta)$.

(I). We first consider the case that  $\mu E+b$ contains a hole including the interval $H_1=f_1(H)$. Say  $(c,d)$ is the gap of $E$ such that
\begin{equation}\label{eq:kong-3}
\mu(c, d)+b\supseteq H_1.
\end{equation}
 Then we claim that
 \begin{equation}\label{eq:kong-4}
 (c,d)=H.
 \end{equation} If $(c, d)\ne H$, then by noting that  the second largest hole of $E$ has length $ A/\beta$, it follows that
 \[
 |\mu|(d-c)\le \frac{|\mu|A}{\beta}<\frac{A}{\beta},
 \]
 leading to contradiction with (\ref{eq:kong-3}). This proves (\ref{eq:kong-4}).
\begin{itemize}\item
If $\mu>0$, then by (\ref{eq:kong-5}) and (\ref{eq:kong-4}) it follows that
$\mu f_{\beta+1}(E)+b\subseteq f_{1(\beta+1)}(E).$ This implies  $\mu \in(0,   1/\beta]$.

\item If $\mu<0$, then again by (\ref{eq:kong-5}) and (\ref{eq:kong-4})  we have $\mu f_0(E)+b\subseteq f_{1(\beta+1)}(E)$. This gives $\mu\in[-1/\beta,0)$. Suppose $\mu=- 1/\beta$. Then
\[-f_{00}(E)+b=-\frac{1}{\beta}f_0(E)+b\subseteq f_{1(\beta+1)}(E).\]
This will lead to a contradiction with Lemma  \ref{notin}. So, $\mu\in(-1/\beta, 0)$.
\end{itemize}

(II). Next we consider the case that $\mu E+b$ does not contain a hole including  $H_1.$  Then by (\ref{eq:kong-5}) we either have $\mu E+b\subseteq f_{1(\beta+1)}(E)$ which implies $0<|\mu|\le {\beta^{-2}}<1/\beta$,  or
\begin{equation}\label{eq:kong-6}
\mu E+b\subseteq f_{00}(E)\cup f_{01}(E)\cup
f_{10}(E)\cup f_{11}(E)= f_0(\Delta)\cap E
\end{equation}
which also gives $0<|\mu|\le 1/{\beta}$. In the following it suffices to prove $\mu\ne -1/\beta$.

Suppose on the contrary that $\mu=-1/\beta$. Then by (\ref{eq:kong-6}) we have
$- \beta^{-1}E+b\subseteq f_0(\Delta)\cap E.$
By the same argument as in the proof of Lemma \ref{notin} it follows that  $b=\ga_\beta/\beta$.
Therefore,
\begin{equation}\label{eq:kong-7}
-\frac E{\beta^2}+\frac{\ga_\beta}{\beta}\subseteq -\frac E{\beta}+\frac{\ga_\beta}{\beta}\subseteq E.
\end{equation}
Note by $\beta\ge 3$ that
\[\left(-\frac E{\beta^2}+\frac{\ga_\beta}{\beta}\right)_{\min}=\frac {\ga_\beta(\beta-1)}{\beta^2}=\frac{\beta+1}{\beta^2}
>\frac 2{\beta(\beta-1)}=f_{01}(\ga_\beta).\]
 Then by (\ref{eq:kong-6}), (\ref{eq:kong-7}) and Lemma  \ref{belong} (ii) it follows that
$-\beta^{-2} E +\beta^{-1} \ga_\beta \subseteq f_1(E)=\beta^{-1}({E+1}),$
 which is equivalent to
\[-\frac E\beta+\frac 2{\beta-1}\subseteq E.\]

So,
 \[-\frac E{\beta^2}+\frac 2{\beta-1}\subseteq -\frac E{\beta}+\frac 2{\beta-1}\subseteq E.\]
Note that
\begin{equation*}
\left(-\frac E{\beta^2}+\frac 2{\beta-1}\right)_{\min}=-\frac {\ga_\beta}{\beta^2}+\frac 2{\beta-1}\in\left(f_{01}(\ga_\beta), f_{\beta+1}(0)\right).
\end{equation*}
By Lemma  \ref{belong} (ii) it follows that
\[-\frac E{\beta^2}+\frac 2{\beta-1}\subseteq f_1(E)=\frac{E+1}\beta.\]  This implies
$-\beta^{-1} E+\ga_\beta\subseteq E.$
One can check that $(-\beta^{-1}E+\ga_\beta)_{\min}>f_1(\ga_\beta)$. By Lemma  \ref{belong} (iii) we can deduce
\[-\frac E\beta+\ga_\beta\subseteq f_{\beta+1}(E)=\frac{E+\beta+1}{\beta}.\]
 This contradicts to Lemma  \ref{notin}. So $\mu\ne-1/\beta$, and we complete the proof.
\end{proof}

In the following we will show that if $\mu E+b\subseteq E$ with $0<|\mu|<1/\beta$, then $\beta\mu E+c\subseteq E$ for some $c\in\R$ (see Lemma \ref{mu2}). To prove this we need the following two lemmas.

\begin{lemma}\label{mul0}
Let $\mu E+b\subseteq E$ with $0<\mu< 1/\beta$. If
\begin{equation}\label{eq:kong-11}
 b\ge\frac{1}{\beta}-\frac{2\mu}{\beta-1}\quad\textrm{and}\quad b>\frac{1-\mu-\mu\beta}{\beta-1},
 \end{equation}
 then
$\beta\mu E+c\subseteq E$ for some $c\in {\mathbb R}$.
\end{lemma}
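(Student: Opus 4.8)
The plan is to reduce the inclusion $\mu E+b\subseteq E$ to one into a single level-$1$ cylinder, and then ``peel off'' that cylinder map to obtain an inclusion of the larger set $\beta\mu E+c$. First I would note that since $0<\mu<1/\beta$, the convex hull $\mu\Delta+b$ has length $\mu\ga_\beta<\ga_\beta/\beta$, which is strictly shorter than each of the three level-$1$ intervals $f_0(\Delta),f_1(\Delta),f_{\beta+1}(\Delta)$; so a priori $\mu E+b$ could still straddle a boundary between adjacent cylinders. The two hypotheses in \eqref{eq:kong-11} are exactly the quantitative conditions preventing this: I would translate $b\ge \beta^{-1}-2\mu/(\beta-1)$ into the statement $(\mu E+b)_{\min}=b\ge f_{01}(\ga_\beta)$ (using $f_{01}(\ga_\beta)=2/(\beta(\beta-1))$ — wait, rather I would recompute this as the right endpoint of $f_{01}(\Delta)$, namely $\beta^{-1}+\ga_\beta/\beta^2$, but in any case it is an explicit rational function of $\beta$), so that $\mu E+b$ lies to the right of $f_0(\Delta)$; and the strict inequality $b>(1-\mu-\mu\beta)/(\beta-1)$ I would rewrite as $(\mu E+b)_{\min}=b>f_{1}(\ga_\beta)$ after simplification, hmm, that is not quite it either — more likely it says $(\mu E+b)_{\min}<f_{\beta+1}(0)$ fails to be forced, so one needs to check which of Lemma~\ref{belong}(ii) or (iii) applies. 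The upshot of this bookkeeping is that $\mu E+b$ is contained in $f_1(\Delta)\cup f_{\beta+1}(\Delta)$ but, by Lemma~\ref{sep}, cannot meet both $f_1(E)$ and $f_{\beta+1}(E)$, hence lands inside exactly one of $f_1(E)$ or $f_{\beta+1}(E)$.

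Granting that, say, $\mu E+b\subseteq f_1(E)=(E+1)/\beta$, I would apply $f_1^{-1}$, i.e. multiply by $\beta$ and subtract $1$, to get $\beta\mu E+(\beta b-1)\subseteq E$; similarly if $\mu E+b\subseteq f_{\beta+1}(E)$ one gets $\beta\mu E+(\beta b-\beta-1)\subseteq E$. Either way $\beta\mu E+c\subseteq E$ with $c=\beta b-1$ or $c=\beta b-\beta-1$, which is precisely the desired conclusion. So the mechanism is: the two explicit lower bounds on $b$ place $\mu E+b$ beyond the problematic overlap region $f_0(\Delta)\cap f_1(\Delta)$ (and beyond $f_{01}(\Delta)$), Lemma~\ref{belong} then confines it to a single first-level piece, Lemma~\ref{sep} rules out the one remaining ambiguity, and composing with the inverse of that piece's similitude inflates the picture by the factor $\beta$.

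The main obstacle is the casework in the first step: one must verify that the two inequalities in \eqref{eq:kong-11} really do exclude every way in which $\mu E+b$ could fail to sit inside a single level-$1$ cylinder. Because $\beta\ge 3$ the cylinders $f_0(\Delta)$ and $f_1(\Delta)$ genuinely overlap (their intersection is a nondegenerate interval for $\beta>3$), so ``$(\mu E+b)_{\min}$ lies in $f_1(\Delta)$'' is not by itself enough — one has to use the combinatorial structure of $E$ near that overlap, exactly as encoded in Lemma~\ref{belong}(ii), whose proof already exploited $f_{0(\beta+1)}=f_{11}$. So I expect the bulk of the argument to be: (a) show the first bound gives $(\mu E+b)_{\min}>f_{01}(\ga_\beta)$; (b) split on whether $(\mu E+b)_{\min}<f_{\beta+1}(0)$ or $\ge f_1(\ga_\beta)$, invoking Lemma~\ref{belong}(ii) in the former situation and Lemma~\ref{belong}(iii) in the latter, with the second bound in \eqref{eq:kong-11} being what guarantees we are in one of these two mutually (nearly) exhaustive regimes; and (c) in the borderline overlap zone $f_1(\ga_\beta)<(\mu E+b)_{\min}<f_{\beta+1}(0)$, appeal to Lemma~\ref{sep} to kill the $f_{\beta+1}$ alternative and land in $f_1(E)$. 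Each sub-step is a short computation with the explicit endpoints $f_0(\ga_\beta),f_{01}(\ga_\beta),f_1(\ga_\beta),f_{\beta+1}(0)$ as rational functions of $\beta$, checked using $\beta\ge 3$; none is deep, but getting the inequalities to line up with the two stated hypotheses is where the care is needed. Once inside a single cylinder, peeling it off to produce $\beta\mu E+c\subseteq E$ is immediate.
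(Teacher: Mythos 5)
Your proposal has a genuine gap: the hypotheses \eqref{eq:kong-11} do \emph{not} confine $\mu E+b$ to a single level-one cylinder, and the case you implicitly discard is exactly where all the work of the lemma lies. The first inequality only gives $b\ge\frac1\beta-\frac{2\mu}{\beta-1}$, which permits $b<f_1(0)=\frac1\beta$ and even $b\le f_{01}(\ga_\beta)=\frac{2}{\beta(\beta-1)}$ (already for $\beta=3$ one has $\frac1\beta-\frac{2\mu}{\beta-1}<\frac{2}{\beta(\beta-1)}$ for every $\mu>0$); so your step (a), the claim $(\mu E+b)_{\min}>f_{01}(\ga_\beta)$, fails. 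Consequently $\mu E+b$ may satisfy $\mu E+b\subseteq f_0(\Delta)\cap E=f_{00}(E)\cup f_{01}(E)\cup f_{10}(E)\cup f_{11}(E)$, straddling the overlap of $f_0(E)$ and $f_1(E)$; Lemma~\ref{sep} is of no help there (it only separates $f_0(E)\cup f_1(E)$ from $f_{\beta+1}(E)$), and neither part of Lemma~\ref{belong} applies to the whole set. Note also that your ``borderline zone'' $f_1(\ga_\beta)<(\mu E+b)_{\min}<f_{\beta+1}(0)$ is the hole $H$, which contains no point of $E$, so that case analysis is moot.

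What the paper does in this residual case is the step your outline is missing: it applies Lemma~\ref{belong}(ii) not to $\mu E+b$ but to its rightmost sub-copy $\mu f_{\beta+1}(E)+b$, whose minimum $\mu\frac{\beta+1}{\beta}+b$ \emph{does} exceed $\frac1\beta\ge f_{01}(\ga_\beta)$ thanks to the first inequality in \eqref{eq:kong-11}. This yields a new inclusion $\mu E+T(b)\subseteq E$ with $T(b)=\beta b+\mu\beta+\mu-1$, and the second inequality in \eqref{eq:kong-11} --- whose role you could not place --- is precisely what makes $T(b)>b$. One then iterates: either some iterate falls into $f_{\beta+1}(E)$ or $f_{1(\beta+1)}(E)$, where peeling off a cylinder gives $\beta\mu E+c\subseteq E$, or else $\mu E+T^n(b)\subseteq E$ for all $n$; since $T^n(b)\nearrow+\infty$ while $E$ is bounded, the latter is impossible. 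Without this iteration-and-escape argument the lemma is not proved; your ``peel off one cylinder'' endgame is correct but is only reached after the iteration terminates.
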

\begin{proof}
Suppose $\mu E+b\subseteq E$. By Lemma \ref{sep} it follows  that
\[\textrm{either}\quad \mu E+b\subseteq f_{\beta+1}(E)= \frac{E+\beta+1}{\beta}\quad\textrm{or}\quad \mu E+b\subseteq f_0(E)\cup f_1(E).\]
In the first case we have $\beta\mu E+c:=\beta\mu E+b\beta-\beta-1\subseteq E$. So it suffices to   consider the latter case.

Suppose $\mu E+b\subseteq f_0(E)\cup f_1(E)$. Note that $\mu\in(0, 1/\beta)$ and $A=f_{\beta+1}(0)-f_1(\gamma_\beta)$. Then the largest hole of $\mu E+b$ has length $\mu A$, which is strictly less than $A/\beta$. This implies that  either
\[ \mu E+b\subseteq f_{1(\beta+1)}(E)\subseteq f_1(E)=\frac{E+1}{\beta}\]
{or} \begin{equation}\label{eq:kong-21}
\mu E+b\subseteq f_{00}(E)\cup f_{01}(E)\cup f_{10}(E)\cup f_{11}(E)= f_0(\Delta)\cap E.
\end{equation}
 In the first case, we have $\beta\mu E+c\subseteq E$ by letting $c=b\beta-1$.
For the second case as in (\ref{eq:kong-21}) we need some effort.

Since $\mu\in(0,1/\beta)$ and $\beta\ge 3$,   by the first inequality in  (\ref{eq:kong-11}) it follows that
\[\left(\mu \cdot \frac{E+\beta+1}{\beta}+b\right)_{\min}=\mu\left(1+\frac{1}{\beta}\right)+b
>\frac 1\beta\ge \frac{2}{ \beta(\beta-1)}= f_{01}(\ga_\beta).\]
Clearly, by (\ref{eq:kong-21}) we have
\[
\left(\mu \cdot \frac{E+\beta+1}{\beta}+b\right)_{\min}\le\left(\mu E+b\right)_{\max}<f_{\beta+1}(0).
\]
So by Lemma  \ref{belong} (ii) it follows that
\[\mu \cdot \frac{E+\beta+1}{\beta}+b\subseteq f_1(E)=\frac{E+1}{\beta},\]
 or, equivalently
\begin{equation*}
\mu E+T(b):=\mu E+b\beta+\mu\beta+\mu-1\subseteq E.
\end{equation*}
Furthermore,  by the second inequality in (\ref{eq:kong-11}) we have $T(b)>b$.

Repeating the above process we have either $\beta\mu E+c\subseteq E$ for some $c\in{\mathbb R}$ or
\begin{equation}\label{eq:kong-12}
\mu E+T^n(b)\subseteq E\quad{\textrm{for all}}\quad n\in\N.
\end{equation} We will finish the proof by showing that the case in (\ref{eq:kong-12}) is impossible.
It is easy to check that
\begin{equation}\label{t2nb}
T^n(x)=\beta^n\left(x-\frac{1-\mu-\mu\beta}{\beta-1}\right)+\frac{1-\mu-\mu\beta}{\beta-1}\quad\forall n\in\N.
\end{equation}
Therefore, by (\ref{eq:kong-11}) and (\ref{t2nb}) it follows that  $T^n(b)\nearrow +\infty$ as $n\to\infty$. This leads to a contradiction with (\ref{eq:kong-12}).
\end{proof}
\begin{remark}\label{remark}
The proof of Lemma  \ref{mul0} implies the following fact: Let $\mu E+b\subseteq E$ with $0<\mu<1/\beta$. If
\[b\ge \frac{1}{\beta}-\frac{2\mu}{\beta-1}\quad\textrm{and}\quad \mu E+b\subseteq f_0(\Delta)\cap E,\]
 then
$\mu E+b\beta+\mu\beta+\mu-1\subseteq E.$ This fact will be used later.
\end{remark}

\begin{lemma}\label{b*}
Let $\mu E+b\subseteq E$ with $0<\mu<1/\beta$. If
\begin{equation}\label{eq:kong-31}
b\ge \frac{1}{\beta}-\frac{2\mu}{\beta-1}\quad\textrm{and}\quad b=\frac{1-\mu-\mu\beta}{\beta-1},
\end{equation} then   $\mu E+b^*\subseteq E$ for some $b^*\ne b$.
Furthermore, we have $b^*>b$ unless $\mu\le  1/({\beta^2+1})$.
\end{lemma}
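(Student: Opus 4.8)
The plan is to locate $\mu E+b$ very precisely. Write $b_0:=\frac{1-\mu-\mu\beta}{\beta-1}$ for the value of $b$ prescribed in (\ref{eq:kong-31}). Since $E_{\min}=0$ and $E_{\max}=\ga_\beta$, we get $(\mu E+b)_{\min}=b_0$ and, after cancellation, $(\mu E+b)_{\max}=\mu\ga_\beta+b_0=\frac{1}{\beta-1}$, which is \emph{independent of $\mu$} and is exactly the fixed point of $f_1$. Moreover, the first condition in (\ref{eq:kong-31}) together with $b=b_0$ is equivalent to $\mu\le\frac1{\beta(\beta-1)}$; hence $b_0>0$ and $\mu E+b\subseteq[0,\frac1{\beta-1}]\subseteq f_0(\Delta)$ automatically, without using Lemma \ref{sep}. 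I would also record the structural identity
\[
E\cap f_0(\Delta)=f_{00}(E)\cup f_{01}(E)\cup f_{10}(E)\cup f_{11}(E),
\]
four similar copies of $E$ of ratio $\beta^{-2}$ and translations $0,\beta^{-2},\beta^{-1},(\beta+1)\beta^{-2}$, which follows from $E\cap f_1(\Delta)=f_1(E)$ (a fact using only $\beta\ge3$ and $f_{0(\beta+1)}=f_{11}$), from $E\cap[0,f_1(\ga_\beta)]=f_0(E)\cup f_1(E)$, and from $f_0(E)=f_{00}(E)\cup f_{01}(E)\cup f_{0(\beta+1)}(E)$.

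Next I would reduce to a bounded window for $\mu$ by a ``zoom''. If $b=b_0\ge\frac1\beta$ (equivalently $\mu\le\frac1{\beta(\beta+1)}$), then $\mu E+b\subseteq f_1(\Delta)$, hence $\mu E+b\subseteq E\cap f_1(\Delta)=f_1(E)$, so $f_1^{-1}(\mu E+b)=\beta\mu E+\frac{1-\beta\mu-\beta^2\mu}{\beta-1}\subseteq E$; since $f_1$ fixes $\frac1{\beta-1}$ the top value is unchanged and only the bottom drops, so finitely many such steps bring the contraction ratio into the window $\big(\frac1{\beta(\beta+1)},\frac1{\beta+1}\big]$. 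If any step is taken then $\mu\le\frac1{\beta(\beta+1)}<\frac1{\beta^2+1}$, so only the existence of $b^{*}$ is required; if no step is taken, the original $\mu$ lies in $\big(\frac1{\beta(\beta+1)},\frac1{\beta(\beta-1)}\big]$, an interval that contains $\frac1{\beta^2+1}$, so there one must additionally obtain $b^{*}>b$ exactly when $\mu>\frac1{\beta^2+1}$. As $f_1^k$ is an increasing similitude fixing $\frac1{\beta-1}$ and conjugating the ratio-$\mu$ inclusion to the ratio-$\beta^k\mu$ one, solving the problem in the window and pushing the resulting $b^{*}$ forward by $f_1^k$ settles the general case, the sign of $b^{*}-b$ being preserved.

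Finally, with $\mu$ in the window, $b=b_0\in(0,\frac1\beta)$ and $\mu E+b$ meets both $f_0(E)$ and $f_1(E)$ inside $f_0(\Delta)$ (its maximum $\frac1{\beta-1}$ lies in $f_1(E)$, while its minimum $b_0<\frac1\beta=\big(f_1(E)\big)_{\min}$ forces a point of it into $f_0(E)$), but lies in neither: it cannot lie in $f_0(E)$, for then $\beta(\mu E+b)$ would be a copy of $E$ straddling the biggest hole $H$, forcing its biggest hole $\beta\mu|H|\ge|H|$, i.e.\ $\mu\ge\frac1\beta$; and $f_{\beta+1}(E),f_{1(\beta+1)}(E)$ are excluded since $(\mu E+b)_{\max}=\frac1{\beta-1}$ is below $f_{1(\beta+1)}(0)=\frac{2\beta+1}{\beta^2}$ and below $f_{\beta+1}(0)=\frac{\beta+1}{\beta}$. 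Using the four-copy decomposition and the twin identity $f_{0(\beta+1)}=f_{11}$ (together with its iterates $f_{1^k0(\beta+1)}=f_{1^{k+2}}$), I would run a case analysis on which of the copies $f_{00}(E),f_{01}(E),f_{10}(E),f_{11}(E)$ are met by $\mu E+b$, and slide $\mu E+b$ --- or the sub-piece of it that pins down the placement --- by $\pm\beta^{-2}$ (exchanging $f_{10}(\Delta)$ with $f_{11}(\Delta)=f_{0(\beta+1)}(\Delta)$) or by $\pm\frac{\beta-1}{\beta^2}$ (exchanging $f_{01}(\Delta)$ with $f_{10}(\Delta)$), checking in each case that the slid set is still contained in $E$; this produces $b^{*}\ne b$ with $\mu E+b^{*}\subseteq E$, and reading off $b^{*}-b$ as a function of $\mu$ yields the threshold $\frac1{\beta^2+1}$. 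The hard part is this last step: because $E$ is not totally self-similar, $E\cap f_0(\Delta)$ is a union of four copies of $E$ whose pairwise overlaps ($f_{10}(\Delta)$ cutting into $f_{11}(\Delta)$, the genuine gap between $f_{01}(\Delta)$ and $f_{10}(\Delta)$ that is present only when $\beta>3$, the nested identities among the $(\beta+1)$-children) depend on $\beta$, so one must check that a proposed slide really lands inside $E$ and not merely inside the convex hull of a basic interval, and keep careful track of which subcase is active in order to pin down the exact value $\mu=\frac1{\beta^2+1}$ at which the guarantee $b^{*}>b$ first fails.
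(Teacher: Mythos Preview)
Your preliminary observations are correct and useful: the top of $\mu E+b$ is the $f_1$-fixed point $\tfrac{1}{\beta-1}$, the hypothesis is equivalent to $\mu\le\tfrac{1}{\beta(\beta-1)}$, and the ``zoom'' through $f_1^{-1}$ is valid as written (it is, however, unnecessary; see below). The gap is in the last paragraph, which is where the actual content of the lemma lies. You announce a case analysis and a ``slide'' of $\mu E+b$ (or ``a sub-piece of it'') by $\pm\beta^{-2}$ or $\pm(\beta-1)\beta^{-2}$, but you do not carry it out, and the slide as stated cannot work on the whole set: for instance, $f_{11}(E)+\beta^{-2}=(E+\beta+2)\beta^{-2}$ is not contained in $E$, so a set meeting $f_{11}(E)$ is not preserved under a rightward $\beta^{-2}$-slide. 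Since $\mu E+b$ genuinely straddles several of the four second-level copies (you yourself show it meets both $f_0(E)$ and $f_1(E)$), no global translation of it by one of those amounts lands back in $E$. ``Sliding a sub-piece'' could be made precise, but that is exactly the step you leave undone, and it is the whole lemma.

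The paper's proof avoids all of this and is much shorter. It never zooms; it works directly with the original $\mu\in(0,\tfrac{1}{\beta(\beta-1)}]$ and feeds a single, well-chosen sub-piece of $\mu E+b$ into Lemma~\ref{belong}. If $\mu\le\tfrac{1}{\beta^2+1}$ one checks $\bigl(\mu f_1(E)+b\bigr)_{\min}=\tfrac{\mu}{\beta}+b\ge\tfrac{1}{\beta}\ge f_{01}(\gamma_\beta)$, so Lemma~\ref{belong}(ii) gives $\mu f_1(E)+b\subseteq f_1(E)$, and inverting $f_1$ yields $\mu E+b^*\subseteq E$ with $b^*=\mu-1+\beta b<b$. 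If $\tfrac{1}{\beta^2+1}<\mu\le\tfrac{1}{\beta(\beta-1)}$ one first checks $\bigl(\mu f_{00}(E)+b\bigr)_{\max}<\tfrac{1}{\beta}$ (here the lower bound on $\mu$ is what is used), so Lemma~\ref{belong}(i) gives $\mu f_0(E)+\beta b\subseteq E$; then one checks $\beta b\in(f_{01}(\gamma_\beta),f_{\beta+1}(0))$ so Lemma~\ref{belong}(ii) gives $\mu f_0(E)+\beta b\subseteq f_1(E)$, and inverting yields $b^*=\beta^2 b-1>b$. In short: the move you are looking for is not a translation of $\mu E+b$, but the choice of a sub-copy $\mu f_{\mathbf i}(E)+b$ whose location is pinned down by Lemma~\ref{belong}, followed by inversion of the corresponding $f_j$; this gives $b^*$ explicitly and the threshold $\tfrac{1}{\beta^2+1}$ drops out of the one-line inequality $\tfrac{\mu}{\beta}+b\ge\tfrac{1}{\beta}$.
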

\begin{proof}
Suppose $b=({1-\mu-\mu\beta})/({\beta-1})$. Then
\[(\mu E+b)_{\max}=\frac{1}{\beta-1}<\frac{\beta+1}{\beta}=f_{\beta+1}(0).\]
So   $\mu E+b\subseteq f_0(E)\cup f_1(E)$.
By (\ref{eq:kong-31}) it follows that
\[0\le \mu\le \frac{1}{\beta(\beta-1)}.\]
We consider the following two cases.

{\bf Case 1.}  If ${1}/({\beta^2+1})<\mu\le {1}/({\beta(\beta-1)})$, then
\begin{align*}
\left(\mu\cdot\frac{E}{\beta^2}+b\right)_{\max}&=\frac{\mu\ga_\beta}{\beta^2}+\frac{1}{\beta-1}-\mu\ga_\beta=\frac 1{\beta-1}-\mu\frac{(\beta+1)^2}{\beta^2}\\
&< \frac 1{\beta-1}-\frac{1}{\beta^2+1}\cdot\frac{(\beta+1)^2}{\beta^2}\\
&\le\frac 1{\beta}=f_{10}(0),
\end{align*}
where in the last inequality we have used $\beta\ge 3$. So, by
  Lemma  \ref{belong} (i) it follows that
$\mu\beta^{-2}E+b\subseteq  \beta^{-1}E,$
 which is equivalent to
 \[\mu\cdot\frac{E}{\beta}
+\beta b\subseteq E.\]
Since ${1}/({\beta^2+1})<\mu\le {1}/({\beta(\beta-1)})$, it follows  that
\[\left(\mu\cdot\frac{E}{\beta}
+\beta b\right)_{\min}=\frac \beta{\beta-1}-\frac{\beta(\beta+1)}{\beta-1}\mu
\in\left(\frac {2 }{\beta(\beta-1)},\frac{\beta+1}{\beta}\right)=(f_{01}(\ga_\beta), f_{\beta+1}(0)).
\]
 By Lemma  \ref{belong} (ii) it follows that
\[\mu\cdot\frac{E}{\beta}
+\beta b\subseteq\frac{E+1}\beta.\]
 Thus
$\mu E+b^*\subseteq E$ with
$b^*:=\beta^2 b-1.$
Since  $\beta\ge 3$ and $\mu \le {1}/{(\beta(\beta-1))}$, by an easy computation we obtain  $b^*>b$.

{\bf Case 2.} If $\mu\le {1}/({\beta^2+1})$, then by using $\beta\ge 3$ and (\ref{eq:kong-31}) we have
\[\left(\mu\cdot\frac{E+1}{\beta}
+b\right)_{\min}=\frac{\mu}{\beta}+\frac{1-\mu-\mu\beta}{\beta-1}\ge \frac 1{\beta}\ge\frac{2}{\beta(\beta-1)}=f_{01}(\ga_\beta).\] Therefore, by Lemma  \ref{belong} (ii) we get
\[\mu\cdot\frac{E+1}{\beta}
+b\subseteq \frac{E+1}\beta.\] This is equivalent to $\mu E+b^*\subseteq E$ with
$b^*=\mu-1+\beta b.$ Obviously, by (\ref{eq:kong-31}) we have $b^*< b$.
\end{proof}

Now we are ready to prove the following lemma.
\begin{lemma}\label{mu2}
Let $\mu E+b\subseteq E$ with $0<|\mu|<1/\beta$. Then there exists $c\in {\mathbb R}$ such that
\[\beta\mu E+c\subseteq E.\]
\end{lemma}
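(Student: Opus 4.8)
The plan is to reduce the statement to the two preparatory lemmas already proved, namely Lemma \ref{mul0} and Lemma \ref{b*}, by a case analysis on the position of the translate $\mu E+b$ inside $E$. Fix $\mu$ with $0<|\mu|<1/\beta$ and $b$ with $\mu E+b\subseteq E$. By Lemma \ref{sep} we know that either $\mu E+b\subseteq f_{\beta+1}(E)$, in which case $\beta\mu E+(\beta b-\beta-1)\subseteq E$ and we are immediately done, or $\mu E+b\subseteq f_0(E)\cup f_1(E)$. In the latter case, since the largest hole of $\mu E+b$ has length $|\mu|A<A/\beta$ (where $A$ is the length of $H$), the translate must sit inside one of the level-$2$ pieces that make up $f_0(E)\cup f_1(E)$; that is, either $\mu E+b\subseteq f_{1(\beta+1)}(E)\subseteq f_1(E)$, giving $\beta\mu E+(\beta b-1)\subseteq E$ and we are done, or $\mu E+b\subseteq f_0(\Delta)\cap E = f_{00}(E)\cup f_{01}(E)\cup f_{10}(E)\cup f_{11}(E)$. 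So the whole problem comes down to this last configuration.

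In the remaining case I would first dispose of the sign of $\mu$. If $\mu=-1/\beta$ we are outside the hypothesis; but more generally, when $\mu<0$ the set $\mu E+b\subseteq f_0(\Delta)\cap E$ is an orientation-reversing copy of $E$, and as in the proof of Lemma \ref{notin} (and the computation at the end of Lemma \ref{mu1}) one pins down $b$ and then propagates the inclusion through $f_0(\Delta)\cap E$ to eventually force a translate of $-E$ into $E$, contradicting Lemma \ref{notin}; alternatively one observes $\beta\mu E+c = -(\beta|\mu|)E+c$ and can run the positive case on $|\mu|$ after replacing $E$ by a reflected copy. I would phrase this so that it suffices to treat $0<\mu<1/\beta$ with $\mu E+b\subseteq f_0(\Delta)\cap E$.

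For $0<\mu<1/\beta$ with $\mu E+b\subseteq f_0(\Delta)\cap E$, the key point is that Lemma \ref{mul0}, Lemma \ref{b*} and Remark \ref{remark} between them cover all admissible values of $b$. Write $b_0:=(1-\mu-\mu\beta)/(\beta-1)$ and note $b\ge 1/\beta-2\mu/(\beta-1)$ is automatic once $\mu E+b\subseteq f_0(\Delta)\cap E$ (this is exactly the lower bound used in Remark \ref{remark}). If $b>b_0$, Lemma \ref{mul0} gives the conclusion directly. If $b=b_0$, Lemma \ref{b*} produces $b^*\ne b$ with $\mu E+b^*\subseteq E$; when $b^*>b_0$ we feed this new translate into Lemma \ref{mul0}, and when $b^*<b_0$ (the case $\mu\le 1/(\beta^2+1)$) we need to continue — applying Remark \ref{remark} repeatedly pushes the translate strictly up by the affine map $T$ of \eqref{t2nb}, whose fixed point is $b_0$ and which is expanding, so after finitely many steps we land in the regime $b>b_0$ and invoke Lemma \ref{mul0}. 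If $b<b_0$, I would likewise use Remark \ref{remark} (the inclusion $\mu E+b\subseteq f_0(\Delta)\cap E$ is preserved as long as the image stays in $f_0(\Delta)$, and $T$ drives $b$ toward $b_0$ from below) to reach $b\ge b_0$ in finitely many steps; the only subtlety is checking that the iterates really do stay inside $f_0(\Delta)\cap E$ until they cross $b_0$, which follows from monotonicity of $T$ and the containment already in force.

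The main obstacle I anticipate is the bookkeeping in the $b<b_0$ and the $b^*<b_0$ sub-cases: one has to argue that repeatedly applying the map $T$ of \eqref{t2nb} eventually moves the translate out of $f_0(\Delta)\cap E$ on the correct side and into a position where one of the ``clean'' alternatives (contained in $f_{1(\beta+1)}(E)$, or in $f_{\beta+1}(E)$, or with $b>b_0$) applies, without ever producing a configuration that contradicts $\mu E+b\subseteq E$ prematurely. This is essentially an escape-in-finite-time argument for the expanding affine map $T$ with fixed point $b_0$, and the care is in verifying the hypotheses of Remark \ref{remark} (in particular the lower bound $b\ge 1/\beta-2\mu/(\beta-1)$ and the containment in $f_0(\Delta)\cap E$) at each step; once the translate leaves that region, Lemma \ref{sep} and the hole-length argument from the beginning of the proof finish it.
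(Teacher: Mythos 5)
There is a genuine gap, and it sits exactly where you flagged the ``main obstacle.'' First, your claim that $b\ge \frac{1}{\beta}-\frac{2\mu}{\beta-1}$ is automatic once $\mu E+b\subseteq f_0(\Delta)\cap E$ is false: that containment only forces $b\ge 0$ and $b+\mu\ga_\beta\le \ga_\beta/\beta$. For example $\mu=\beta^{-2}$, $b=0$ gives $\mu E+b=f_{00}(E)\subseteq f_0(\Delta)\cap E$ with $b=0<\frac{1}{\beta}-\frac{2\mu}{\beta-1}$. So Remark \ref{remark} is simply not applicable in that range, and the paper must treat $0\le b<\frac 1\beta-\frac{2\mu}{\beta-1}$ as a separate case, using a \emph{different} affine map $S(x)=\beta x+\mu$ (obtained by applying Lemma \ref{belong}~(i) to the sub-copy $\mu f_1(E)+b$), whose fixed point $-\mu/(\beta-1)$ is negative so that iteration genuinely increases $b$ up to the threshold. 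Your proposal has no mechanism for this range.

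Second, and more seriously, you have the dynamics of $T$ backwards. $T$ is expanding with fixed point $b_0=(1-\mu-\mu\beta)/(\beta-1)$, so from $b<b_0$ one has $T(b)<b<b_0$ and $T^n(b)\to-\infty$: iteration moves the translation \emph{down and away} from $b_0$, never ``up past $b_0$'' as your escape argument requires. (The divergence $T^n(b)\to+\infty$ is only used in Lemma \ref{mul0}, where one starts \emph{above} $b_0$ and derives a contradiction.) The paper's actual route in the sub-case $b<b_0$ is to iterate $T$ \emph{downward} until the translation drops below $\frac1\beta-\frac{2\mu}{\beta-1}$, then re-enter via the $S$-map and one further application of Lemma \ref{belong}~(ii) to produce a new translation $b''>b_0$, at which point Lemma \ref{mul0} applies. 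Likewise, in the sub-case $b=b_0$ with $b^*<b_0$ your ``push back up with $T$'' step cannot work; the paper instead shows this branch forces the exceptional value $\mu=1/(\beta(\beta+1))$ and rules it out by a separate contradiction with Lemma \ref{mu1}. Your opening reduction (Lemma \ref{sep} plus the hole-length argument, and the dispatch of the cases $\mu E+b\subseteq f_{\beta+1}(E)$ and $\mu E+b\subseteq f_{1(\beta+1)}(E)$) matches the paper and is fine, but the core of the argument as you describe it would not go through.
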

\begin{proof}
Since the case for $\mu<0$ can be proved similarly, we only consider the case for $\mu>0$.

By repeating the same process as in the proof of Lemma  \ref{mul0} we have either $\beta\mu E+c\subseteq E$ for some $c\in{\mathbb R}$, or
\begin{equation}\label{eq:kong-22}
\mu E+b\subseteq   f_0(\Delta)\cap E.
\end{equation}
In the following it suffices to consider the case in (\ref{eq:kong-22}). Clearly, by (\ref{eq:kong-22}) it follows that
\[
0\le b=(\mu E+b)_{\min}\le f_0\left(\ga_\beta\right)=\frac{\ga_\beta}{\beta}.
\]
If $b\ge f_{01}(\ga_\beta)=2/(\beta(\beta-1))$, then by Lemma  \ref{belong} (ii) it follows that
$\mu E+b\subseteq f_1(E)=\beta^{-1}({E+1}),$
which implies
$\mu\beta E+c\subseteq E$ with $c=b\beta-1$. So we only need to consider $b\in\left[0,2/(\beta(\beta-1))\right)$. It is convenient to divide the proof into the following two cases.

{\bf Case 1.} $0\le b<  1/{\beta}- {2\mu}/{(\beta-1)}$.
Then
\[\left(\mu\cdot\frac{E+1}{\beta}+b\right)_{\max}=\mu\frac{\ga_\beta+1}{\beta}+b< \frac{2\mu}{\beta-1}+\frac 1\beta-\frac{2\mu}{\beta-1}=\frac 1\beta=f_{10}(0).
\]
 Therefore, we have
 \[\mu\cdot\frac{E+1}{\beta}+b\subseteq \frac E{\beta}\]
  by Lemma  \ref{belong} (i), which is equivalent to $\mu E+\beta b+\mu\subseteq E$.
Define $S(x):=\beta x+\mu$. Then
\begin{equation}\label{sn}
S^n(x)=\beta^n\left(x+\frac{\mu}{\beta-1}\right)-\frac{\mu}{\beta-1}.
 \end{equation}
 Note that $S(b)\ne b$. So there exists a unique positive integer $m_0$ such that
 \[S^{m_0}(b)\ge\frac 1{\beta}-\frac{2\mu}{\beta-1}\quad \textrm{and}\quad
S^{m_0-1}(b)< \frac 1{\beta}-\frac{2\mu}{\beta-1}.\]

We continue the above process by replacing $\mu E+b\subseteq E$ with $\mu E+S(b)\subseteq E$ to draw
\[
\mu E+S^{m_0}(b)\subseteq E,\]
 {which} is  reduced to the below case.

\medskip

{\bf Case 2.} $1/{\beta}-{2\mu}/{(\beta-1)}\le b<2/(\beta(\beta-1))$.
There are three cases to consider.

{
{\bf (2A).} $b>(1-\mu-\mu\beta)/(\beta-1)$. Then by Lemma \ref{mul0} there exists $c\in\R$ such that $\beta\mu E+c\subseteq E$.

{\bf(2B).} $b<(1-\mu-\mu\beta)/(\beta-1)$. Then by Remark \ref{remark} we have
\[\mu E+T(b)=\mu E+b\beta+\mu\beta+\mu-1\subseteq E.\]
 Since $T(b)<b$ and
 \[T^n(b)=\beta^n\left(b-\frac{1-\mu-\mu\beta}{\beta-1}\right)+\frac{1-\mu-\mu\beta}{\beta-1},\]
  so there exists a unique positive integer $n_0$ such that
\[T^{n_0-1}(b)\ge\frac 1{\beta}-\frac{2\mu}{\beta-1}\quad \textrm{and}\quad T^{n_0}(b)<\frac 1{\beta}-\frac{2\mu}{\beta-1}.\]
Then 
we have $\mu E+T^{n_0}(b)\subseteq E$ by using Remark \ref{remark} for $n_0-1$ times.

Suppose $\mu E+T^{n_0}(b)\subseteq E$. If $0\le b':=T^{n_0}(b)< 1/\beta-\mu \ga_\beta$, then $(\mu E+b')_{\max}<1/\beta$. By Lemma \ref{belong} (i) we have $\mu E+b'\subseteq f_0(E)$, which implies $\beta \mu E+c\subseteq E$ with $c=\beta b'$.

If $1/\beta-\mu\ga_\beta\le b'<1/\beta-2\mu/(\beta-1)$, then by Case 1 it follows that
$
\mu E+\beta b'+\mu \subseteq E.$
We claim
\[
f_{01}(\ga_\beta)<\left(\mu\frac{E+\beta+1}{\beta}+\beta b'+\mu\right)_{\min}<f_{\beta+1}(0),
\]
or equivalently,
\begin{equation}\label{eq:kong-51}
\frac{2}{\beta(\beta-1)}<2\mu +\beta b'+\frac{\mu}{\beta}<\frac{\beta+1}{\beta}.
\end{equation}
Since $b'<1/\beta-2\mu/(\beta-1)$, one can verify the right inequality of (\ref{eq:kong-51}) directly. For the left inequality in (\ref{eq:kong-51}) we note from the hypothesis in Case (2B) that
\[
\frac{1}{\beta}-\frac{2\mu}{\beta-1}\le b<\frac{1-\mu-\mu\beta}{\beta-1}.
\]
This implies $\mu<1/(\beta(\beta-1))$. Using this and the fact that $\beta\ge 3$ we can prove the left inequality of (\ref{eq:kong-51}). So by (\ref{eq:kong-51}) and Lemma \ref{belong} (ii) it follows that
\[
\mu\frac{E+\beta+1}{\beta}+\beta b'+\mu\subseteq \frac{E+1}{\beta},
\]
which implies
\[
\mu E+\beta^2 b'+2\beta\mu+\mu-1\subseteq E.
\]
Using $\mu<1/(\beta(\beta-1))$ and the fact $\beta\ge 3$ one can verify that
\[
b'':=\beta^2 b'+2\beta\mu+\mu-1>\frac{1-\mu-\mu\beta}{\beta-1}>\frac{1}{\beta}-\frac{2\mu}{\beta-1}.
\]
Then by Lemma \ref{mul0} we conclude that $\beta\mu E+c\subseteq E$ for some $c\in\R$.

{\bf(2C).} $b=(1-\mu-\mu\beta)/(\beta-1)$. Then by Lemma  \ref{b*} we have $\mu E+b^*\subseteq E$ with $b^*\ne b$.
Furthermore, by Lemma \ref{b*} it follows that  if $\mu>1/(\beta^2+1)$, then we have $b^*>b$. In this case we can conclude by Lemma \ref{mul0} that $\beta\mu E+c\subseteq E$ for some $c\in\R$.

In the following we assume $\mu\le 1/(\beta^2+1)$. Then by the proof of Lemma \ref{b*} it follows that $\mu E+b^*\subseteq E$ with
\begin{equation}\label{eq:kong-61}
b^*=\mu-1+\beta b=\mu-1+\beta\frac{1-\mu-\mu\beta}{\beta-1}<b.
\end{equation}
So, either we can get $\beta\mu E+c\subseteq E$ for some $c\in\R$, or we can reduce to Case 1 that
\[
b^*<\frac{1}{\beta}-\frac{2\mu}{\beta-1}\quad\textrm{and}\quad S^n(b^*)=b\quad\textrm{for some }n\in\N,
\]
where $S(x)=\beta x+\mu$.

Suppose $S^n(b^*)=b=(1-\mu-\mu\beta)/(\beta-1)$. Then by (\ref{sn}) it follows that
\[
b^*=\frac{1-\mu\beta-\mu\beta^n}{\beta^n(\beta-1)}.
\]
Combined with (\ref{eq:kong-61}) we obtain
\[
\mu=\frac{\beta^n-1}{\beta^{n+2}-\beta}.
\]
Using   $\mu\le 1/(\beta^2+1)$ and $\beta\ge 3$ this implies $n=1$. So
\[
\mu=\frac{1}{\beta(\beta+1)}\quad\textrm{and} \quad b^*=\frac{1-2\mu\beta}{\beta(\beta-1)}=\frac{1}{\beta(\beta+1)}.
\]
We will finish the proof in Case (2C) by proving
\begin{equation}\label{eq:kong-62}
\mu E+b^*=\frac{E+1}{\beta(\beta+1)}\nsubseteq E.
\end{equation}

Suppose $(E+1)/(\beta(\beta+1))\subseteq E$. Since
\[
\left(\frac{E+1}{\beta(\beta+1)}\right)_{\max}=\frac{\ga_\beta+1}{\beta(\beta+1)}=\frac{2}{\beta^2-1}<\frac{2}{\beta(\beta-1)}=f_{01}(\gamma_\beta),
\]
by Lemma \ref{belong} (i) it follows that
\[
\frac{E+1}{\beta(\beta+1)}\subseteq\frac{E}{\beta}\quad\Longrightarrow\quad \frac{E+1}{\beta+1}\subseteq E.
\]
One can check that $\hat \mu E+\hat b:=(E+1)/(\beta+1)\subseteq E$ satisfies the conditions in Lemma \ref{mul0}. Then by Lemma \ref{mul0} we get
\[
\frac{\beta}{\beta+1}E+c\subseteq E\quad\textrm{for some }c\in\R.
\]
This leads to a contradiction with Lemma \ref{mu1} since $\beta/(\beta+1)>1/\beta$.
}

Therefore, for $\mu E+b\subseteq E$ with $\mu\in(0, 1/\beta)$ we must have $\beta\mu E+c\subseteq E$ for some $c\in \R$. This completes the proof.
\end{proof}

\begin{proof}[Proof of Proposition \ref{prop:contration-ratio}]
By Lemma  \ref{mu1}, we have either $0<|\mu|<\frac 1{\beta}$ or $\mu=\frac 1{\beta}$. Suppose $\mu\ne \beta^{-n}$ for any positive integer $n$. Then there exists a positive integer $k$
such that ${\beta^{-(k+1)}}<\mu<{\beta^{-k}}$ or ${\beta^{-(k+1)}}\le -\mu<{\beta^{-k}}$. Using Lemma  \ref{mu2} for $k$ times yields
\begin{equation*}
\beta^k\mu E+c_k\subseteq E\,\,{\textrm{for some}}\,c_k\in{\mathbb R},
\end{equation*}
where  $\beta^k\mu\in (1/\beta,1)$ or $\beta^k\mu\in(-1, -1/\beta]$. This  leads to a contradiction with Lemma  \ref{mu1}.
\end{proof}

\subsection{Determination of $b$ for $\mu E+b\subset E$}
First consider the case for $\mu=\beta^{-1}$.
\begin{lemma}\label{b1}
If $g(E)=\beta^{-1}E+b\subseteq E$, then $b\in\{f_0(0),f_1(0),f_{\beta+1}(0)\}$
\end{lemma}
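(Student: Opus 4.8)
The plan is to use the structural lemmas already in hand—especially Lemma \ref{belong} and Lemma \ref{sep}—to pin down the translation $b$ once the contraction ratio is known to be $\beta^{-1}$. The key observation is that $\beta^{-1}E+b$ has the same diameter as each of the three first-level pieces $f_0(E)$, $f_1(E)$, $f_{\beta+1}(E)$ (all have diameter $\ga_\beta/\beta$), so if $\beta^{-1}E+b\subseteq E$ then it must in fact coincide with one of these pieces, which forces $b$ to be the left endpoint of that piece, i.e.\ $b\in\{f_0(0),f_1(0),f_{\beta+1}(0)\}=\{0,1/\beta,(\beta+1)/\beta\}$.

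To make this precise I would first apply Lemma \ref{sep}: either $\beta^{-1}E+b\subseteq f_{\beta+1}(E)$, or $\beta^{-1}E+b\subseteq f_0(E)\cup f_1(E)$. In the first case, since $f_{\beta+1}(E)=\beta^{-1}(E+\beta+1)$ has the same diameter $\ga_\beta/\beta$ as $\beta^{-1}E+b$, and $\beta^{-1}E+b$ contains $\beta^{-1}\cdot 0+b=b$ and $\beta^{-1}\ga_\beta+b$ as its min and max, the inclusion of an interval-hull of length $\ga_\beta/\beta$ into another of the same length forces equality of hulls; hence $b=f_{\beta+1}(0)=(\beta+1)/\beta$. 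In the second case, look at $(\beta^{-1}E+b)_{\min}=b$. If $b<f_{10}(0)=1/\beta$ one wants $\beta^{-1}E+b\subseteq f_0(E)$; here I would note that actually $(\beta^{-1}E+b)_{\max}=\ga_\beta/\beta+b$, and since the whole set must avoid the hole structure, combine with Lemma \ref{belong}(i)—once we check $(\beta^{-1}E+b)_{\max}<f_{10}(0)$ cannot fail without pushing mass past $f_1(\ga_\beta)$ and into $f_{\beta+1}$, contradicting the case assumption—to get $\beta^{-1}E+b\subseteq f_0(E)=\beta^{-1}E$, hence $b=0=f_0(0)$. Symmetrically, if $b>f_{01}(\ga_\beta)=2/(\beta(\beta-1))$ one uses $b<f_{\beta+1}(0)$ (again from the case assumption and diameter count) together with Lemma \ref{belong}(ii) to conclude $\beta^{-1}E+b\subseteq f_1(E)=\beta^{-1}(E+1)$, hence $b=f_1(0)=1/\beta$.

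The step I expect to be the main obstacle is the intermediate range $f_{10}(0)\le b\le f_{01}(\ga_\beta)$, i.e.\ $1/\beta\le b\le 2/(\beta(\beta-1))$ (a genuinely nonempty interval when $\beta\ge 3$), where the left endpoint of $\beta^{-1}E+b$ lies in the overlap zone of $f_0(\Delta)$ and $f_1(\Delta)$ and neither part of Lemma \ref{belong} applies directly. Here I would argue that in this regime $\beta^{-1}E+b$, having diameter $\ga_\beta/\beta$ and left endpoint $\ge 1/\beta=f_{10}(0)$, would have its maximum at $\ga_\beta/\beta+b\ge \ga_\beta/\beta+1/\beta$, which exceeds $f_1(\ga_\beta)=\beta^{-1}(\ga_\beta+1)$ and also exceeds the right end of $f_0(\Delta)\cap f_1(\Delta)$, so the set meets $H_1=f_1(H)$ or spills toward $f_{\beta+1}$; a short endpoint/length computation using $\beta\ge 3$ shows the only way the inclusion $\beta^{-1}E+b\subseteq E$ can survive is if $b$ is actually one of the boundary values $1/\beta$ already covered. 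In other words the intermediate case collapses to the $b=f_1(0)$ case after checking one inequality with $\beta\ge 3$; then assembling the three possibilities gives $b\in\{f_0(0),f_1(0),f_{\beta+1}(0)\}$ as claimed.
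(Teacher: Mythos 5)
Your overall skeleton (split via Lemma \ref{sep}, dispose of the $f_{\beta+1}$ case by a diameter count, and use Lemma \ref{belong}(ii) when $b>f_{01}(\ga_\beta)$) matches the paper, and those parts are fine; but your treatment of small positive $b$ has a genuine gap. First, Lemma \ref{belong}(i) requires $(\mu E+b)_{\max}<f_{10}(0)=1/\beta$, and for $\mu=\beta^{-1}$ this \emph{never} holds: $(\beta^{-1}E+b)_{\max}=b+\ga_\beta/\beta\ge\ga_\beta/\beta>1/\beta$ for every $b\ge 0$. Your proposed patch --- that the failure of this inequality ``pushes mass past $f_1(\ga_\beta)$ and into $f_{\beta+1}$'', contradicting the case assumption --- is false: for all $0\le b\le 1/\beta$ one has $b+\ga_\beta/\beta\le(\ga_\beta+1)/\beta=f_1(\ga_\beta)$ (your claim that $\ga_\beta/\beta+1/\beta$ \emph{exceeds} $f_1(\ga_\beta)$ is in fact an equality), so the set stays inside $f_0(\Delta)\cup f_1(\Delta)$ and no contradiction arises. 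Second, your ``intermediate range'' $f_{10}(0)\le b\le f_{01}(\ga_\beta)$ is empty for $\beta>3$ and the single point $1/3$ for $\beta=3$, since $f_{01}(\ga_\beta)=2/(\beta(\beta-1))\le 1/\beta=f_{10}(0)$ when $\beta\ge 3$; the range that actually needs work is $0<b\le f_{01}(\ga_\beta)$, where neither part of Lemma \ref{belong} applies and your argument says nothing.

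The missing idea is the paper's gap-length argument. If $\beta^{-1}E+b\subseteq f_0(E)\cup f_1(E)$ and $b>0$, then $(\beta^{-1}E+b)_{\max}=b+\ga_\beta/\beta>\ga_\beta/\beta=f_{11}(\ga_\beta)$, the left endpoint of $H_1=f_1(H)=\bigl(f_{11}(\ga_\beta),f_{1(\beta+1)}(0)\bigr)$, which is disjoint from $f_0(E)\cup f_1(E)$; since also $b\le 1/\beta<\ga_\beta/\beta$, the set $\beta^{-1}E+b$ has points on both sides of $H_1$ and hence a gap containing $H_1$. Every gap of the affine copy $\beta^{-1}E+b$ is a $\beta^{-1}$-scaled gap of $E$, so it has length at most $|H|/\beta=|H_1|$, with equality only for the image of $H$ itself. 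Therefore $\beta^{-1}H+b=H_1=\beta^{-1}H+1/\beta$, forcing $b=1/\beta$. Combined with the two cases you do handle correctly, this closes the argument; without it, the values $b\in(0,f_{01}(\ga_\beta)]$ are not excluded.
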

\begin{proof}
Suppose $\beta^{-1} E+b\subseteq E$. Then by Lemma \ref{sep}  we have
\[\textrm{either}\quad \frac E\beta+b\subseteq f_{\beta+1}(E)= \frac{E+\beta+1}{\beta}\quad \textrm{or}\quad \frac E\beta+b\subseteq f_0(E)\cup f_1(E).\]
 In the first case, we clearly have $b=f_{\beta+1}(0)$, and we pay attention to the second case.

Suppose $ \beta^{-1}E+b\subseteq f_0(E)\cup f_1(E)$.  Then either $ \beta^{-1}E+b\subseteq f_0(\Delta)\cap E$ or $\mu E+b$ contains the hole  $H_1=\left(f_{11}\left(\ga_\beta\right),f_{1(\beta+1)}(0)\right)$  of length $A/\beta$, where $A$ is the length of the largest hole
$H=\left(f_1\left(\ga_\beta\right),f_{\beta+1}(0)\right)$ of $E$.
If $\beta^{-1}E+b\subseteq f_0(\Delta)\cap E$, then $b=0=f_1(0)$.

If $\beta^{-1}E+b$ contains the hole $H_1$, note that every hole of $\beta^{-1}E+b$ is mapped by a hole of $E$ with scaling $1/\beta$, then this hole must be mapped by exactly the largest hole $H$ of $E$. Thus,
\[\frac 1\beta\cdot f_{1}\left(\ga_\beta\right)+b=f_{11}\left(\ga_\beta\right)\quad\textrm{and}\quad \frac1\beta\cdot f_{\beta+1}(0)+b=f_{1(\beta+1)}(0).\]
This yields  $b=1/\beta=f_1(0)$, completing the proof.
\end{proof}

\begin{proof}[Proof of Theorem \ref{ifs}]
Suppose $g(E)=\mu E+b\subseteq E$ with $0<|\mu|<1$ and $b\in\R$. By Proposition \ref{prop:contration-ratio} it follows that $\mu=\beta^{-n}$ for some $n\in\N$. In the following we will prove by induction on $n$ that if $\mu=\beta^{-n}$, then $b=f_{\bf i}(0)$ for some ${\bf i}\in\Omega^n$.

When $n=1$, this has been proved in Lemma \ref{b1}.
Assume $n\ge 1$ and suppose $b\in \{f_{\bf i}(0):{\bf i}\in\Omega^n\}$ for $\mu=\beta^{-n}$. We will prove that $b\in\{f_{\bf i}(0):{\bf i}\in\Omega^{n+1}\}$ for
$\mu=\beta^{-(n+1)}$.

{\bf Case A.} $b\in f_{\beta+1}(E)$. {Then} by Lemma \ref{belong} (iii) we have $\beta^{-(n+1)}E+b\subseteq f_{\beta+1}(E)$. Thus
\[\beta^{-n}E+\beta b-{\beta}-1\subseteq E.\] By induction we have
$\beta b-\beta-1=f_{\bf j}(0)$ for some ${\bf j}\in\Omega^n$. It follows that $b=f_{(\beta+1)\bf j}(0)$ with $(\beta+1){\bf j}\in\Omega^{n+1}$.

{\bf Case B.} $b\in f_0(E)\cup f_1(E)$. If $b\ge  \beta^{-1}$, then by Lemma   \ref{belong} (ii) we have $\beta^{-(n+1)}E+b\subseteq f_1(E)$. In this case,
$b=f_{1\bf j}(0)$ for some ${\bf j}\in\Omega^n$. Now we suppose $b< \beta^{-1}$.
If $b< \beta^{-1}-\beta^{-(n+1)}\ga_\beta$, then $\left(\beta^{-(n+1)}E+b\right)_{\max}<1/{\beta}$. By Lemma   \ref{belong} (i) we have
\[\beta^{-(n+1)}E+b\subseteq \frac E\beta,\]
and therefore
$b=f_{0\bf j}(0)$ for some ${\bf j}\in\Omega^n$.
In the following we assume  $\beta^{-1}-{\beta^{-(n+1)}}\ga_\beta\le b<\beta^{-1}$. Then there exists a unique $m\ge n+1\ge 2$ such that
\begin{equation}\label{m}
\frac{1}\beta- \frac{\ga_\beta}{\beta^{m+1}}>b\ge \frac{1}\beta- \frac{\ga_\beta}{\beta^{m}}.
\end{equation}
We claim that
\begin{equation}\label{eq:kong-41}
b\in\left\{f_{01(\beta+1)^{m-2}0}(0),f_{01(\beta+1)^{m-2}1}(0),f_{01(\beta+1)^{m-1}}(0)\right\}.
\end{equation}

By (\ref{m}) we have $\left(\beta^{-(m+1)}E+b\right)_{\max}< 1/\beta$. So  by Lemma   \ref{belong} (i) it follows that
$\beta^{-(m+1)}E+b\subseteq\beta^{-1} E,$ which is equivalent to
\[\beta^{-m} E+b\beta\subseteq E.\]
 Then  by (\ref{m}) and using $\beta\ge 3$ it follows that
\begin{eqnarray*}
b\beta\ge1-\frac{\ga_\beta}{\beta^{m-1}}\ge 1-\frac{ \ga_\beta}{\beta}\ge \frac1\beta\quad{\textrm{and}}\quad
\beta^{-m}\ga_\beta+b\beta<1<\frac{\beta+1}{\beta}.
\end{eqnarray*}
So, by Lemma   \ref{belong} (ii) we have  $\beta^{-m} E+b\beta\subseteq f_1(E)$, which implies
\begin{equation*}
\beta^{-(m-1)}E+b\beta^2-1\subseteq E.
\end{equation*}
If $m=2$, then by Lemma \ref{b1} we prove (\ref{eq:kong-41}). Otherwise,  for $m\ge 3$ it follows from   (\ref{m})   that
\begin{equation}\label{>}
b\beta^2-1\ge \beta-1-\frac {\ga_\beta}{\beta^{m-2}}\ge \ga_\beta-\frac {\ga_\beta}{\beta^{m-2}}\ge \frac{\beta+1}{\beta}.
\end{equation}
Therefore, by Lemma \ref{belong} (iii)
we have
$
\beta^{-(m-1)}E+b\beta^2-1\subseteq \beta^{-1}({E+\beta+1})
$
which implies
\[\beta^{-(m-2)}E+\beta(b\beta^2-1)
-(\beta+1)\subseteq E.
\]
If $m=3$, then by Lemma (\ref{b1}) we get (\ref{eq:kong-41}). Otherwise, for $m\ge 4$ it follows
 by (\ref{>}) that  \[\beta(b\beta^2-1)
-(\beta+1)\ge \beta\left(\ga_\beta-\frac {\ga_\beta}{\beta^{m-2}}\right)-(\beta+1)=\ga_\beta-\frac {\ga_\beta}{\beta^{m-3}}\ge \frac{\beta+1}{\beta}.\]

We continue the above process for $m-3$ times to get
\begin{equation*}
\frac E\beta+\beta^{m-2}(b\beta^2-1)-(1+\beta)(1+\beta+\cdots+\beta^{m-3})\subseteq E.
\end{equation*}
Therefore by Lemma   \ref{b1}, we have
\[\beta^{m-2}(b\beta^2-1)-(1+\beta)(1+\beta+\cdots+\beta^{m-3})\in \left\{f_0(0),f_1(0),f_{\beta+1}(0)\right\}.\]
This proves (\ref{eq:kong-41}), establishing the claim.

{\bf(B1).} $b=f_{01(\beta+1)^{m-2}0}(0)=f_{01(\beta+1)^{m-2}}(0)$. 
If $m=n+1$, then $b=f_{01(\beta+1)^{n-1}}(0)$, and we are done. Now we suppose $m\ge n+2$. Then we claim that $\beta^{-(n+1)}E+b\subseteq E$ is impossible.

Note that $y=\pi(0^{m-n-2}1 0^k (\beta+1)^\f)\in E$ for any  $k\ge 2$, where $\pi$ is the natural projection map from the symbolic space $\Om^\N$ to $E$.
Then one can check for $k$ sufficiently large that
\[
\beta^{-(n+1)}y+f_{01(\beta+1)^{m-2}}(0)\in H_{01(\beta+1)^{m-2}}=\left(f_{01(\beta+1)^{m-2}1}\left(\ga_\beta\right), f_{01(\beta+1)^{m-1}}(0)\right),
\]
where the open interval $H_{01(\beta+1)^{m-2}}$ is a hole of $E$.
So $\beta^{-(n+1)}E+b\nsubseteq E$.

{\bf(B2).} $b=f_{01(\beta+1)^{m-2}1}(0)$.
Here we also show that  $\beta^{-(n+1)}y+b\nsubseteq E$ for all $m\ge n+1$. Note that $z=\pi(0^{m-n-1}(\beta+1)0^k(\beta+1)^\f)\in E$ for all  $k\in\mathbb N$.  Observe by using $\beta\ge3$  that
\begin{equation}\label{eq:kong}
f_{(\beta+1)1}\left(\ga_\beta\right)\le f_1(0)+f_{\beta+1}(0).
\end{equation}
 Then by (\ref{eq:kong}) one can prove for $k$ sufficiently large that
\[
\beta^{-(n+1)}z+f_{01(\beta+1)^{m-2}1}(0)\in H_{01(\beta+1)^{m-1}}=\left(f_{01(\beta+1)^{m-1}1}\left(\ga_\beta\right), f_{01(\beta+1)^m}(0)\right)
\]
with $H_{01(\beta+1)^{m-1}}$ being a hole of $E$. So $\beta^{-(n+1)}y+b\nsubseteq E$.

{\bf(B3).} $b=f_{01(\beta+1)^{m-1}}(0)$. By the same argument as in  (B1) it follows that $\beta^{-(n+1)}E+b\nsubseteq E$ for all $m\ge n+1$.

Hence, for $\beta^{-(n+1)}E+b\subseteq E$ we have $b\in\set{f_{\bf j}(0): {\bf j}\in\Om^{n+1}}$. This completes the proof by induction.
\end{proof}

\section{Spectrum and unique/multiple expansions}

In this section we first prove Proposition \ref{prop:spectrum}, which shows that the spectrum of $E$ is constant if $\beta\ge 3$.
\begin{proof}[Proof of Proposition \ref{prop:spectrum}]First, let $n=0$ and $d_0=1$, then we have $\ell_\beta\le 1$. Now we prove the other direction. It suffices to prove the following claim:  \emph{for any $n\ge 0$ and any $\sum_{i=0}^n d_i\beta^i\ne 0$ with $d_n\ne 0$ we have $|\sum_{i=0}^n d_i\beta^i|\ge 1$.}  We will prove this by induction on $n$.

Clearly, the claim holds true for $n=0$. Suppose it holds for all $n<k$ for some positive integer $k$. Now we consider  $n=k$. Let $\sum_{i=0}^{k}d_i\beta^i\ne 0$ with $d_i\in\set{0, \pm 1, \pm\beta, \pm(\beta+1)}$ and $d_{k}\ne 0$. Then $|d_{k}|\in\set{1,\beta, \beta+1}$. We consider the following three cases.

Case I. $|d_k|=\beta$ or $\beta+1$. Then by using $|d_i|\le \beta+1$ and $\beta\ge 3$ it follows that
\[
\left|\sum_{i=0}^k d_i\beta^i\right|\ge \beta\cdot\beta^{k}-(\beta+1)\sum_{i=0}^{k-1}\beta^i=\beta^{k+1}\left(1-\frac{\beta+1}{\beta(\beta-1)}\right)+\frac{\beta+1}{\beta-1}>1
\]
as desired.

Case II. $|d_k|=1$. Without loss of generality we assume $d_k=1$. If $d_{k-1}=-\beta$ or $-(\beta+1)$, then we can rewrite
\[
\sum_{i=0}^k d_i\beta^i=\sum_{i=0}^{k-1}d_i'\beta^i
\]
with $d_{k-1}'=d_{k-1}+\beta$ and $d_i'=d_i$ for all $0\le i<k-1$. By the induction hypothesis it follows that $|\sum_{i=0}^k d_i\beta^i|\ge 1$.
If $d_{k-1}\notin\set{-\beta, -(\beta+1)}$, then by using $|d_i|\le \beta+1$ and $\beta\ge 3$ it follows that
\begin{align*}
\left|\sum_{i=0}^k d_i\beta^i\right|&\ge \beta^{k}-\beta^{k-1}-(\beta+1)\sum_{i=0}^{k-2}\beta^i
=\beta^{k-1}\frac{\beta^2-3\beta}{\beta-1}+\frac{\beta+1}{\beta-1}>1.\end{align*}

By induction this proves the claim, and then completes the proof.
\end{proof}

In the following we will investigate the Hausdorff dimensions and Hausdorff measures of $E$ and $E^{(k)}$. Note that $E^{(k)}$ is the set of points in
$E$ having precisely $k$ different codings {with respect to the} alphabet $\Omega$.
For this we need  the following property of $E$.
\begin{lemma}\label{lem:overlap}
$f_0(E)\cap f_1(E)=f_{0(\beta+1)}(E)=f_{11}(E)$.
\end{lemma}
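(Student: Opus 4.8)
The plan is to prove the two claimed equalities separately, each by a double inclusion that exploits the complete overlap relation $f_{0(\beta+1)}=f_{11}$ already recorded in the introduction. The equality $f_{0(\beta+1)}(E)=f_{11}(E)$ is immediate since the two composite maps are literally the same affine map, so the real content is showing $f_0(E)\cap f_1(E)=f_{11}(E)$.

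For the inclusion $f_{11}(E)\subseteq f_0(E)\cap f_1(E)$, I would note that $f_{11}(E)=f_1(f_1(E))\subseteq f_1(E)$ trivially, and $f_{11}(E)=f_{0(\beta+1)}(E)=f_0(f_{\beta+1}(E))\subseteq f_0(E)$; this direction is essentially free. The work is in the reverse inclusion $f_0(E)\cap f_1(E)\subseteq f_{11}(E)$. Here I would argue at the level of the basic intervals in Figure~\ref{fig:1}. Since $\beta\ge 3$, the interval $f_{01}(\Delta)$ lies to the left of $f_{10}(\Delta)$, and the only level-$2$ basic intervals of $f_0(\Delta)$ and of $f_1(\Delta)$ that overlap are $f_{0(\beta+1)}(\Delta)=f_{11}(\Delta)$ (this is the blue interval in the figure). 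Concretely, I would compute the relevant endpoints: $f_0(E)\subseteq f_0(\Delta)=[0,\ga_\beta/\beta]$ while $f_1(E)\subseteq f_1(\Delta)=[1/\beta,(\ga_\beta+1)/\beta]$, so any point of the intersection lies in $[1/\beta,\ga_\beta/\beta]$. Within $f_0(E)$, a point with first digit (after the $0$) equal to $0$ or $1$ lies in $f_{00}(\Delta)\cup f_{01}(\Delta)\subseteq[0,f_{01}(\ga_\beta)]=[0,2/(\beta(\beta-1))]$, and one checks $2/(\beta(\beta-1))<1/\beta$ using $\beta\ge3$, so such points are too small to lie in $f_1(E)$; hence a point of $f_0(E)\cap f_1(E)$ must lie in $f_{0(\beta+1)}(E)$. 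Symmetrically, within $f_1(E)$ a point with first digit $1$ or $\beta+1$ lies in $f_{1(\beta+1)}(\Delta)\cup f_{11}(\Delta)$; but wait — $f_{11}=f_{0(\beta+1)}$ is exactly the target, and a point with digits $1,0$ lies in $f_{10}(\Delta)$, which I must compare with $f_{0(\beta+1)}(\Delta)$. So the cleaner route is: a point of $f_0(E)\cap f_1(E)$ lies in $f_{0(\beta+1)}(E)=f_{11}(E)$ from the $f_0$ side alone, and we are done.

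Thus the key steps in order are: (1) observe $f_{0(\beta+1)}(E)=f_{11}(E)$ by definition of the maps; (2) prove $f_{11}(E)\subseteq f_0(E)\cap f_1(E)$ using $f_{11}=f_1\circ f_1$ and $f_{11}=f_0\circ f_{\beta+1}$; (3) prove the reverse inclusion by splitting $f_0(E)=f_{00}(E)\cup f_{01}(E)\cup f_{0(\beta+1)}(E)$ and showing the first two pieces are contained in $[0,2/(\beta(\beta-1))]$, which is disjoint from $f_1(E)\subseteq[1/\beta,(\ga_\beta+1)/\beta]$ because $2/(\beta(\beta-1))<1/\beta$ for $\beta\ge3$. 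The main obstacle — though it is a mild one — is being careful with the boundary case $\beta=3$, where the disjointness may become a single shared endpoint rather than a genuine gap; there one checks that the shared endpoint itself does lie in $f_{0(\beta+1)}(E)$, so the inclusion still holds. Everything else is a routine comparison of the explicitly computable endpoints of the first two levels of basic intervals.
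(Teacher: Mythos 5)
Your argument coincides in substance with the paper's: both reduce the nontrivial inclusion to showing that, of the three second-level pieces $f_{00}(E)$, $f_{01}(E)$, $f_{0(\beta+1)}(E)$ of $f_0(E)$, only the last can meet $f_1(E)$. The paper compresses this into the unexplained step $f_0(E)\cap f_1(\Delta)=f_{0(\beta+1)}(E)\cap f_1(\Delta)$, which you make explicit via the endpoint comparison $f_{01}(\ga_\beta)=2/(\beta(\beta-1))$ versus $f_{10}(0)=1/\beta$. For $\beta>3$ that comparison is a strict inequality, the first two pieces are disjoint from $f_1(E)$, and your proof is complete and correct.

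The genuine gap is your treatment of $\beta=3$. You rightly notice that $2/(\beta(\beta-1))<1/\beta$ fails there (both sides equal $1/3$), but your proposed fix --- that the shared endpoint lies in $f_{0(\beta+1)}(E)$ --- is false, and in fact no fix is possible because the lemma itself fails at $\beta=3$. Indeed $f_{0(\beta+1)}(E)=f_{04}(E)\subseteq f_{04}(\Delta)=[4/9,\,2/3]$, which does not contain $1/3$; yet $1/3=f_{10}(0)\in f_{10}(E)$ and $1/3=f_{01}(\ga_3)\in f_{01}(E)$ (equivalently, $1/3=\pi(10^\infty)=\pi(01\,4^\infty)$), so $1/3\in f_0(E)\cap f_1(E)\setminus f_{0(\beta+1)}(E)$ and the reverse inclusion is genuinely false when $\beta=3$. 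To your credit, you are the only one who flagged the boundary case at all: the paper's own proof silently makes the same unjustified step at $\beta=3$ (its asserted equality $f_0(E)\cap f_1(\Delta)=f_{0(\beta+1)}(E)\cap f_1(\Delta)$ fails at the point $1/3$). The correct conclusion is that the lemma needs the hypothesis $\beta>3$, or else the case $\beta=3$ must be excluded or handled separately wherever the lemma is used (in particular in the proof of Proposition \ref{prop:dim-E-U}, where the characterization of multiple codings via the blocks $11$ and $0(\beta+1)$ also breaks at $x=1/3$).
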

\begin{proof}
By $f_{0(\beta+1)}=f_{11}$ we have
\begin{align*}
f_0(E)\cap f_1(E)\subseteq f_0(E)\cap f_1(\Delta)&=f_{0(\beta+1)}(E)\cap f_1(\Delta)\\
&=f_{11}(E)\cap f_1(\Delta)=f_{11}(E)=f_{0(\beta+1)}(E).
\end{align*}
On the other hand, since $f_{0(\beta+1)}(E)\subseteq f_0(E)$ and $f_{11}(E)\subseteq f_1(E)$, using $f_{0(\beta+1)}=f_{11}$ again it follows that
\[
f_0(E)\cap f_1(E)\supseteq f_{0(\beta+1)}(E)\cap f_{11}(E)=f_{11}(E)=f_{0(\beta+1)}(E).
\]
This completes the proof.
\end{proof}

Now we are ready to prove Proposition \ref{prop:dim-E-U}.

\begin{proof}[Proof of Proposition \ref{prop:dim-E-U}]
The proof is similar to that of \cite[Theorem 2]{DJKLX}. For completeness we sketch the main idea.

Note by Lemma \ref{lem:overlap} that for any point $x\in E$, if $x$ has a coding containing the block $11$ or $0(\beta+1)$, then $x$ has at least two different codings by observing the substitution $11\sim 0(\beta+1)$. On the other hand, if $x$ has two different codings, say $(c_i)$ and $(d_i)$ (without loss of generality we assume $c_1< d_1$), then we must have $c_1=0$ and $d_1=1$. So, $x\in f_0(E)\cap f_1(E)$. By Lemma \ref{lem:overlap} it follows that $x\in f_{0(\beta+1)}(E)=f_{11}(E)$. This implies that $c_1c_2=0(1+\beta)$ and $d_1d_2=11$. Therefore, $x\in E$ has multiple codings if and only if its codings contain the block $11$ or $0(\beta+1)$.

Note that $f_{11}=f_{0(\beta+1)}$. One can verify that the set $E$ is a graph-directed set satisfying the open set condition. More precisely, let $X_A$ be the subshift of finite type with the forbidden block $0(\beta+1)$. Then
\[
X_A=\set{(d_i)\in\Om^\N: A_{d_i, d_{i+1}}=1},
\]
where $A$ is the transition matrix with states $0, 1$ and $\beta+1$ given by
\[
A=\left(
\begin{array}{ccc}
1&1&0\\
1&1&1\\
1&1&1
\end{array}\right).
\]
Then $E=\pi_\beta(X_A)$ is a graph-directed set.  By a well-known result of \cite{MW} it follows that
\[
\dim_H E=\frac{\log r_A}{\log \beta}=\frac{\log(3+\sqrt{5})-\log 2}{\log \beta},
\]
where $r_A={({3+\sqrt{5}})/{2}}$ is the spectral radius of $A$. Since the matrix $A$ is {irreducible}, or equivalently, the subshift of finite type $X_A$ is transitive with respect to the left shift, we conclude that $\mathcal H^{\dim_H E}(E)\in(0, \f)$. Then (i) follows from the second part that
\[
\dim_H E^{(k)}<\dim_H E\quad\textrm{for any }k\ne 2^{\aleph_0}\quad\textrm{and}\quad E=E^{(2^{\aleph_0})}\cup E^{(\aleph_0)}\cup\bigcup_{k=1}^\f E^{(k)},\]
which we will prove below.

Now we consider the subset $E^{(k)}$. Let $U:=E^{(1)}$ be the set of $x\in E$ having a unique coding. Observe that a point in $E$ has multiple codings if and only if its codings contain the block $11$ or $0(\beta+1)$. In other words, the set $U$ consists of all  $x\in E$ with its unique coding   containing neither $11$ nor $0(\beta+1)$. So, $U$ is also a graph-directed set satisfying the open set condition. Let $X_B$ be the subshift of finite
type with forbidden blocks $11$ and $0(\beta+1)$. Then
\[
X_B=\set{(d_i)\in\Om^\N: B_{d_i, d_{i+1}}=1},
\]
where $B$ is the transition matrix on the alphabet $\Om=\set{0, 1,\beta+1}$ defined by
\[
B=\left(
\begin{array}{ccc}
1&1&0\\
1&0&1\\
1&1&1
\end{array}\right).
\]
Therefore, $U=\pi_\beta(X_B)$. So, the Hausdorff dimension of $U$ is given by (cf.~\cite{MW})
\[
\dim_H U=\frac{\log r_B}{\log \beta},
\]
where $r_B\approx 2.24698$ is the spectral radius of $B$, which satisfies the equation $r^3-2r^2-r+1=0$. Furthermore, since the matrix $B$ is also irreducible,   the corresponding Hausdorff measure $\mathcal H^{\dim_H U}(U)\in(0, \f)$.

Let $\mathbf U$ be the set of all unique codings of {points in $U$}. Note that for  any $k\in\N$,  any $x\in E^{(k)}$ has precisely $k$ different codings, and all of these codings must be end in $\mathbf U$. Thus,
\[
E^{(k)}\subseteq\bigcup_{n=0}^\f \bigcup_{\mathbf i\in\Om^n} f_{\mathbf i}(U),
\]
which implies $\dim_H E^{(k)}\le \dim_H U$.

 If $k=2^m$ for some $m\in\N$, then one can verify that
\begin{equation}\label{eq:mar-18-1}
 \Lambda_{n,m}:=\pi_\beta(\set{d_1\ldots d_n(0(\beta+1))^m c_1c_2\ldots: d_n=c_1=\beta+1,~d_1\ldots d_n\in B_n(\mathbf U), (c_i)\in\mathbf U})
\end{equation}
is a subset of $E^{(2^m)}$ for any $n\in\N$, where $B_n(\mathbf U)$ is the set of admissible blocks of length $n$ in $\mathbf U$. This implies that
\[
\dim_H E^{(k)}\ge \dim_H\Lambda_{n,m}=\dim_H U.
\]
Furthermore, note that for different $n, n'$ the sets $\Lambda_{n,m}$ and $\Lambda_{n',m}$ are disjoint. Write $t:=\dim_H U$ and denote by $U(\beta+1)$   the set of $x\in U$ with its unique coding beginning with $\beta+1$. Since $X_B$ is transitive, we have
\begin{equation}\label{eq:mar-18-2}
\mathcal H^t(U(\beta+1))>0.
\end{equation} So, by  (\ref{eq:mar-18-1}) it follows that
\begin{align*}
\mathcal H^{t}(E^{(2^m)})&\ge \mathcal H^{t}(\bigcup_{n=1}^\f\Lambda_{n,m})=\sum_{n=1}^\f\mathcal H^{t}(\Lambda_{n,m})\\
&=\sum_{n=1}^\f\sum_{d_1\ldots d_n\in B_n(\mathbf U), d_n=\beta+1} \mathcal H^t(f_{d_1\ldots d_n(0(\beta+1))^m}(U(\beta+1)))\\
&=\beta^{-2mt}\mathcal H^t(U(\beta+1))\sum_{n=1}^\f \left(\sum_{d_1\ldots d_n\in B_n(\mathbf U), d_n=\beta+1}\beta^{-nt}\right)\\
&\ge \beta^{-2mt}\mathcal H^t(U(\beta+1))\sum_{n=1}^\f C=\f,
\end{align*}
where the last inequality follows by using the Perron-Frobenius Theorem (cf.~\cite{LM})  that
\[
 \sum_{d_1\ldots d_n\in B_n(\mathbf U), d_n=\beta+1}\beta^{-nt} \ge C>0
\]
for any $n\ge 1$.

Finally, we prove that $E^{(k)}=\emptyset$ for any other $k\ne 2^m$. Let  $x\in E^{(k)}$. Then $x$ has multiple codings. So there exists a smallest integer $n_1\ge 0$ such that
\[T^{n_1}(x)\in f_0(E)\cap f_1(E)=f_{0(\beta+1)}(E)=f_{11}(E),\]
where $T: E\to E$ is the inverse map of $f_{0}, f_1$ and $f_{\beta+1}$. This implies that  all codings of $x_1:=T^{n_1}(x)$ begin with either $0(\beta+1)$ or $11$.  Note that there exists a unique word $d_1\ldots d_{n_1}\in B_{n_1}(\mathbf U)$ such that $x=f_{d_1\ldots d_{n_1}}(x_1)$. So, any coding of $x$ either begins with $d_1\ldots d_{n_1}0(\beta+1)$ or begins with $d_1\ldots d_{n_1} 11$. {Since $f_{d_1\ldots d_{n_1}0(\beta+1)}=f_{d_1\ldots d_{n_1}11}$,}  there exists a unique $y_1\in E$ such that
\[
x=f_{d_1\ldots d_{n_1}0(\beta+1)}(y_1)=f_{d_1\ldots d_{n_1}11}(y_1).
\]
Now we proceed with the same argument on $y_1$ instead of $x$. Then there exist a smallest integer $n_2\ge 0$,  a unique word $d_{n_1+1}\ldots d_{n_1+n_2}$ and a unique $y_2\in E$ such that
\[
y_1=f_{d_{n_1+1}\ldots d_{n_1+n_2}0(\beta+1)}(y_2)=f_{d_{n_1+1}\ldots d_{n_1+n_2}11}(y_2).
\]

If there exists $m\in\N$ such that the above procedure stops after $m$ steps, then $x$ has precisely $2^m$ different codings. If the above procedure never stops, then $x$ has a continuum of different codings. So,  $E^{(k)}=\emptyset$ for all $k\ne 2^m$.  This completes the proof.
\end{proof}

\section*{Acknowledgements}
{The first author was supported by  NSFC No.~11971079 and   the Fundamental
and Frontier Research Project of Chongqing No.~cstc2019jcyj-msxmX0338.}

\end{document}